\def \ZZ{\mathbb Z}
\def \RR{\mathbb R}
\def \EE{\mathbb E}
\def \PP{\mathbb P}
\def \SS{\mathbb S}
\def \N{\mathcal N}
\def \sclr#1#2{\langle #1,#2\rangle}
\def \open#1{#1^{o}}
\def \close#1{\overline{#1}}
\renewcommand{\geq}{\geqslant}
\renewcommand{\leq}{\leqslant}
\definecolor{darkgreen}{rgb}{0,0.4,0}
\definecolor{MyDarkBlue}{rgb}{0,0.08,0.50}
\definecolor{BrickRed}{rgb}{0.65,0.08,0}
\theoremstyle{plain}
\newtheorem{theorem}{Theorem}
\newtheorem{proposition}[theorem]{Proposition}
\newtheorem{corollary}[theorem]{Corollary}
\newtheorem{lemma}[theorem]{Lemma}
\theoremstyle{definition}
\begin{document}

\author[R.~Garbit]{Rodolphe Garbit}
\address{Universit\'e d'Angers\\D\'epartement de Math\'ematiques\\ LAREMA\\ UMR CNRS 6093\\ 2 Boulevard Lavoisier\\49045 Angers Cedex 1\\ France}
\email{rodolphe.garbit@univ-angers.fr}
\author[K.~Raschel]{Kilian Raschel}
\address{CNRS\\ F\'ed\'eration Denis Poisson\\ Laboratoire de Math\'ematiques et Physique Th\'eorique\\UMR CNRS 7350\\Parc de Grandmont\\ 37200 Tours\\ France}
\email{kilian.raschel@lmpt.univ-tours.fr}

\title[On the exit time from a cone for random walks with drift]{On the exit time from a cone for random walks with~drift}
\subjclass[2000]{60G40; 60G50; 05A16}
\keywords{Random walk; Cones; Exit time; Laplace transform}

\thanks{}

\date{\today}

\begin{abstract} 
We compute the exponential decay of the probability that a given multi-dimensional random walk stays in a convex cone up to time $n$, as $n$ goes to infinity. We show that the latter equals the minimum, on the dual cone, of the Laplace transform of the random walk increments. As an example, our results find applications in the counting of walks in orthants, a classical domain in enumerative combinatorics.
\end{abstract}

\maketitle

\section{Introduction and main results}
\label{sec:intro}

\subsection{General context}
For general random processes in $\RR^d$, $d\geq 1$ (including in particular Brownian motion and random walks), it is at once important and natural to study the first exit times $\tau_K$ from certain domains $K$. Precisely, for discrete-time random processes $(S_n)_{n\geq0}$, $\tau_K$ is defined by 
\begin{equation}
\label{eq:def_tau_K}
     \tau_K:=\inf\{n\geq 1: S_n\notin K\}.
\end{equation}
Indeed, these random times carry much valuable information on the process. As an example, the fruitful theory of random walks fluctuations (see, e.g., Spitzer \cite{Sp64}) is based on the analysis of the $\tau_K$ for compact domains $K$.

In a recent past (1990 to present), the case of cones $K$ has arisen a great interest in the mathematical community, due to interactions with many areas: First, certain random walks in conical domains can be treated with representation theory \cite{Bi91,Bi92} (in that case, the cones are Weyl chambers related to Lie algebras). Further, the exit times $\tau_K$ are crucial to construct conditioned random walks in cones, which appear in the theory of quantum random walks \cite{Bi91,Bi92}, random matrices \cite{Dy62}, non-colliding random walks \cite{DeWa10,EiKo08}, etc. In another direction, the probability 
\begin{equation}
\label{eq:def_proba}
     \mathbb P^x[\tau_K>n]
\end{equation}
admits a direct combinatorial interpretation in terms of the number of walks starting from $x$ and staying in the cone $K$ up to time $n$. These counting numbers are particularly important in enumerative combinatorics \cite{BoMi10,FaRa12,JoMiYe13}, and are the topic of many recent studies.

For processes with no drift, the exit times $\tau_K$ from cones are now well studied in the literature. The case of Brownian motion was solved by DeBlassie \cite{DB87} (see also Ba{\~n}uelos and Smits \cite{BaSm97}): He showed that the probability \eqref{eq:def_proba} satisfies a certain partial differential equation (the heat equation), and he solved it in terms of hypergeometric functions. Concerning discrete-time random processes, in the one-dimensional case, the asymptotic behavior of the non-exit probability \eqref{eq:def_proba} is well known, as well as that of
\begin{equation}
\label{eq:def_proba_llt}
     \mathbb P^x[S_n = y,\tau_K>n]
\end{equation}
(called a local limit theorem), thanks to the theory of fluctuations of random walks \cite{Sp64}. In higher dimension, some sporadic cases have first been analyzed: We may cite \cite{DoOC05}, for which there exists a strong underlying algebraic structure (certain reflexion groups are finite), or the case of Weyl chambers, which has been considered in \cite{DeWa10,EiKo08}. For more general cones, but essentially for random walks with increments having a finite support, Varopoulos \cite{Va99} gave lower and upper bounds for the probability \eqref{eq:def_proba}. The first author of the present article showed in \cite{Gar07} that for general random walks, the probability \eqref{eq:def_proba} does not decay exponentially fast. More recently, Denisov and Wachtel \cite{DeWa11} provided the exact asymptotics for both \eqref{eq:def_proba} and \eqref{eq:def_proba_llt}. 

For processes with drift, much less is known. Concerning Brownian motion, one of the first significant results is due to Biane, Bougerol and O'Connell \cite{BiBoOC05}, who derived the asymptotics of the non-exit probability \eqref{eq:def_proba} in the case of Weyl chambers of type $A$, when the drift is inside of the cone. Later on, by using different techniques, Pucha\l a and Rolski \cite{PuRo08} obtained (also in the context of Weyl chambers) the asymptotics of \eqref{eq:def_proba} without any hypothesis on the drift. In \cite{GaRa13} we gave, for Brownian motion with a given arbitrary drift, the asymptotics of \eqref{eq:def_proba} for a large class of cones.

As for random walks $(S_n)_{n\geq0}$ with increments having a common distribution $\mu$, the exponential decay of \eqref{eq:def_proba_llt} is known: It equals the global minimum on $\RR^d$ of the Laplace transform of $\mu$:
\begin{equation}
\label{eq:def_Laplace_transform}
     L_{\mu}(x):=\EE_{\mu}[e^{\sclr{x}{S_{n+1}-S_n}}]=\int_{\RR^d}e^{\sclr{x}{y}}\mu(\text{d}y).
\end{equation}
This was first proved by Iglehart \cite{Ig74} for one-dimensional random walks. For more general walks, this was shown and used by many authors (see, e.g., \cite{DeWa11,Gar07}). Regarding now the asymptotic behavior of the probability \eqref{eq:def_proba}, the case $d=1$ is known, see \cite{Do89}.
\begin{quote}
{\it It is the aim of this paper to give, for a very broad class of random walks and cones, in any dimension, the exponential decay of the non-exit probability \eqref{eq:def_proba}. We shall also relate its value to the Laplace transform, by proving that it equals the minimum of this function {\em on the dual cone}; we give the exact statement (Theorem \ref{maintheorem}) in Subsection \ref{subsec:main_results}.}
\end{quote}

Our main motivation comes from the possible applications to lattice path enumeration. Indeed, our results provide the first unified treatment of the question of determining the growth constant for the number of lattice paths confined to the positive orthant. They also solve a conjecture on these numbers stated in \cite{JoMiYe13}. However, we would like to emphasize that our results are much more general (see Section \ref{sec:assumptions}).

Simultaneously and independently of us, Duraj \cite{Du13} obtained in some particular case
the exact asymptotics of the non-exit probability \eqref{eq:def_proba} for lattice random walks. In the following section we introduce our main ideas and tools, and we discuss the difference between our results and his.

\subsection{Preliminary discussion}
\label{subsec:discussion}

Let $(S_n)_{n\geq0}=(S^{(1)}_n,\ldots,S^{(d)}_n)_{n\geq0}$ be the canonical random walk on $\RR^d$. Given any probability measure $\mu$ on $\RR^d$ and $x\in\RR^d$, we denote by $\PP^x_{\mu}$ the probability measure under which $(S_n)_{n\geq0}$ is a random walk started at $x$ whose independent increments $(S_{n+1}-S_n)_{n\geq0}$ have common distribution $\mu$.

The standard idea to handle the case of random walks with non-zero drift is to carry out an exponential change of measure. More precisely, if $z$ is a point in $\RR^d$ such that $L_{\mu}(z)$ is finite, then we can consider the new probability measure
\begin{equation*}
     \mu_z(\text{d}y)=\frac{e^{\sclr{z}{y}}}{L_{\mu}(z)}\mu(\text{d}y).
\end{equation*}     
It is {\it theoretically} possible to compare the behavior of the random walk under $\PP_{\mu}$ with its behavior under $\PP_{\mu_z}$ thanks to Cram\'er's formula (see Lemma \ref{lem:cramer}). For example, for the local probabilities, this formula gives
\begin{equation*}
     \PP_{\mu}^x[S_n=y, \tau_K>n]=L_{\mu}(z)^ne^{\sclr{z}{x-y}}\PP_{\mu_z}^x[S_n=y,\tau_K>n].
\end{equation*}
Since the asymptotic behavior of those probabilities are now well known when the random walk has no drift (see \cite{DeWa11}), the general problem can be solved if one can find a point $z$ such that the distribution $\mu_z$ is {\em centered}. It is also well known that this condition is fulfilled if and only if $z$ is a critical point for $L_{\mu}$ (under the assumption that $L_{\mu}$ be finite in a neighborhood of $z$). By convexity of $L_{\mu}$, this means that one has to find a local, hence {\em global minimum point} $z=x_0$ in $\RR^d$.

This approach is used by Duraj in \cite{Du13} to analyze the non-exit probability \eqref{eq:def_proba}. Indeed, for a lattice random walk, one can sum the contribution of each $y$ to eventually obtain (below $\mu_0$ is an abbreviation for $\mu_{x_0}$)
\begin{equation*}
\PP_{\mu}^x[\tau_K>n]=L_{\mu}(x_0)^n\sum_{y\in K\cap\ZZ^d}e^{\sclr{x_0}{x-y}}\PP_{\mu_{0}}^x[S_n=y,\tau_K>n].
\end{equation*}
But then, one needs to impose an additional condition on the position of the global minimum point $x_0$ with respect to $K$ so as to ensure that the infinite sum of asymptotics will be convergent as well. This technical assumption on $x_0$ done in \cite{Du13} happens to have a very natural interpretation in the light of our analysis. Indeed, for the non-exit probability, Cram\'er's formula (applied with any $z$) gives
\begin{equation}
\label{eq:cramer_formula} 
\PP_{\mu}^x[\tau_K>n]=L_{\mu}(z)^ne^{\sclr{z}{x}}\EE_{\mu_0}^x[e^{-\sclr{z}{S_n}},\tau_K>n],
\end{equation}
and one sees that the main difficulty will arise because of the exponential term inside the expectation.

Let $K^*$ denote the {\em dual} cone associated with $K$, that is, the closed convex cone defined by
\begin{equation}
\label{eq:def_dual_cone}
     K^*:=\{z\in\mathbb R^d: \sclr{x}{z}\geq 0,\forall x\in K\},
\end{equation}  
where $\sclr{x}{z}$ denotes the standard inner product. If $z$ belongs to $K^*$, it immediately follows from \eqref{eq:cramer_formula} that
\begin{equation*}
\PP_{\mu}^x[\tau_K>n]\leq L_{\mu}(z)^ne^{\sclr{z}{x}}.
\end{equation*}
Hence, the infimum $\rho$ of the Laplace transform on $K^*$ is always an upper bound of the exponential rate, i.e., 
\begin{equation*}
\limsup_{n\to\infty}\PP_{\mu}^x[\tau_K>n]^{1/n}\leq \rho:=\inf_{K^*}L_{\mu}.
\end{equation*}
Our main result shows that, in fact, when the infimum $\rho$ is a minimum, it is also a lower bound of the above quantity.
Thus $\rho$ is the value of the exponential decreasing rate of the non-exit probability. 
%
It is now easily seen that assumptions 1 and 5 in \cite{Du13} on the global minimum $x_0$ imply that it belongs to the interior $\open{(K^*)}$ of the dual cone (and in this case, clearly, $x_0=x^*$). In \cite{Du13} the author then obtains the precise asymptotics of the non-exit probability \eqref{eq:def_proba} in this specific case. 

The general philosophy of our work is different (and in a sense complementary). We shall only focus on the exponential rate
\begin{equation*}
     \rho_x:=\limsup_{n\to\infty}\PP_{\mu}^x[\tau_K>n]^{1/n},
\end{equation*}     
and we answer completely the question of determining its value under fairly broad assumptions, regardless the position of the global minimum point $x_0$.

\subsection{Cones and random walks considered}
\label{sec:assumptions}
In this work, we consider a {\em closed convex cone} $K$ with {\em non-empty interior}. Recall that we denote by $K^*$ its {\em dual} cone, which turns out to be particularly relevant for our problem. It is the closed convex cone defined in \eqref{eq:def_dual_cone}. We also set 
\begin{equation*}
     K_{\delta}:=K+\delta v,
\end{equation*}
where $\delta\in \RR$ and $v$ is some fixed vector in $\open{K}$, the interior of $K$.


Throughout this paper, we shall make the assumption that $\mu$ is truly $d$-dimensional in the following sense:
\begin{enumerate}[label=(H\arabic{*}),ref={\rm (H\arabic{*})}]
\setcounter{enumi}{0}
\item\label{H1}The support of the probability measure $\mu$ is not included in any linear hyperplane.
\end{enumerate}
For a square-integrable probability measure $\mu$ with mean $m$ and variance-covariance matrix $\Gamma$, it is well known that the minimal (with respect to inclusion) affine subspace $A$ such that $\mu(A)=1$ is $m+(\ker\Gamma)^{\perp}$. Hence, the condition in~\ref{H1} holds if and only if $m+(\ker\Gamma)^\perp$ is not included in any hyperplane (or equivalently, if and only if $\Gamma$ is non-degenerate or $\dim(\ker\Gamma)=1$ and $m\notin (\ker\Gamma)^{\perp}$). Notice that in the case where $m=0$, the assumption \ref{H1} is equivalent to $\ker\Gamma=\{0\}$, i.e., $\Gamma$ is non-degenerate.

As pointed out in the preceding section, our analysis of the decreasing rate of the non-exit probability requires the existence of a minimum point for the Laplace transform $L_{\mu}$ on the dual cone. Thus we shall impose the following technical condition:
\begin{enumerate}[label=(H\arabic{*}),ref={\rm (H\arabic{*})}]
\setcounter{enumi}{1}
\item\label{H2} There exists a point $x^*\in K^*$ and an open neighborhood $V$ of $x^*$ in $\mathbb R^d$ such that $L_{\mu}(x)$ is finite for all $x\in V$, and $x^*$ is a minimum point of $L_{\mu}$ restricted to $K^*\cap V$.
\end{enumerate}

It is worth noting that we do not assume the existence of moments of $\mu$. Hypothesis \ref{H2} implies the existence of these moments only in the case where $x^*=0$.

In view of applications, we will prove in Subsection \ref{subsec:geometric_interpretation}  that for random walks with all exponential moments (i.e., $L_{\mu}(x)$ is finite for all $x\in \RR^d$), the
condition \ref{H2} is equivalent to the more geometric-flavoured condition: 
\begin{enumerate}[label=(H\arabic{*}'),ref={\rm (H\arabic{*}')}]
\setcounter{enumi}{1}
\item\label{H2bis} The support of $\mu$ is not included in any half-space $u^{-}:=\{x\in\RR^d:\sclr{u}{x}\leq 0\}$ with $u\in K^{*}\setminus\{0\}$.
\end{enumerate}

\subsection{Main results}
\label{subsec:main_results}
We are now in position to state our main result:

\begin{theorem}
\label{maintheorem}
Suppose $\mu$ satisfies \ref{H1} and \ref{H2}. Then,
\begin{equation*}
     \lim_{n\to\infty}\PP^x_{\mu}[\tau_K>n]^{1/n}=L_{\mu}(x^*),
\end{equation*}     
for all $x\in K_{\delta}$, for some constant $\delta\geq 0$.
\end{theorem}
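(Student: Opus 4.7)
The plan is to prove the two inequalities separately.

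\textbf{Upper bound.} This is immediate from Cram\'er's formula \eqref{eq:cramer_formula}. Applied with $z = x^*$ it gives
\begin{equation*}
\PP_\mu^x[\tau_K > n] = L_\mu(x^*)^n e^{\sclr{x^*}{x}} \EE_{\mu^*}^x\bigl[e^{-\sclr{x^*}{S_n}} \indic{\tau_K > n}\bigr],
\end{equation*}
where $\mu^* := \mu_{x^*}$. Since $x^* \in K^*$ and $S_n \in K$ on $\{\tau_K > n\}$, the weight $e^{-\sclr{x^*}{S_n}}$ is bounded by $1$, hence $\PP_\mu^x[\tau_K > n] \leq L_\mu(x^*)^n e^{\sclr{x^*}{x}}$ and $\limsup_n \PP_\mu^x[\tau_K > n]^{1/n} \leq L_\mu(x^*)$.

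\textbf{Lower bound.} The goal, via the same identity, is to prove that
\begin{equation*}
\liminf_{n\to\infty} \EE_{\mu^*}^x\bigl[e^{-\sclr{x^*}{S_n}}, \tau_K > n\bigr]^{1/n} \geq 1.
\end{equation*}
A key preliminary computation shows that the drift $m^* := \nabla L_\mu(x^*) / L_\mu(x^*)$ of $\mu^*$ satisfies $m^* \in K$ and $\sclr{x^*}{m^*} = 0$. Indeed, the first-order condition for a minimum of $L_\mu$ on the convex cone $K^*$ yields $\sclr{\nabla L_\mu(x^*)}{y - x^*} \geq 0$ for every $y \in K^*$; specialising to $y = t x^*$ with $t = 0$ and $t = 2$ forces $\sclr{\nabla L_\mu(x^*)}{x^*} = 0$, and the inequality then becomes $\sclr{\nabla L_\mu(x^*)}{y} \geq 0$ for all $y \in K^*$, so that $\nabla L_\mu(x^*) \in K^{**} = K$.

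The analysis then splits according to where $x^*$ lies in $K^*$. When $x^* \in \open{(K^*)}$, it is an unconstrained critical point of $L_\mu$, so $\nabla L_\mu(x^*) = 0$ and $\mu^*$ is centred and non-degenerate (by \ref{H1}). The non-exit probability $\PP_{\mu^*}^x[\tau_K > n]$ is then known not to decay exponentially; more precisely, the Denisov--Wachtel local limit theorem \cite{DeWa11} provides an asymptotic of the form $\PP_{\mu^*}^x[S_n = y, \tau_K > n] \sim c\, V(x) V'(y) n^{-\alpha - d/2}$ on the scale $|y| \asymp \sqrt{n}$. Summing the weight $e^{-\sclr{x^*}{y}}$ against this asymptotic over $y \in K$ with $|y| \leq C\sqrt{n}$ yields a lower bound of order $n^{-\alpha} e^{-C'\sqrt{n}}$, which is sub-exponential and gives the desired $\liminf \geq 1$.

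The remaining situation, $x^* \in \partial K^*$, is the main obstacle: here $m^*$ still lies in $K$ but belongs to the supporting hyperplane $H := \{x^*\}^\perp$ of $K$, so the walk under $\mu^*$ drifts \emph{along the boundary} of $K$ and the driftless estimate is not directly available. My plan is to decompose $\RR^d = H \oplus \RR x^*$: the one-dimensional component $\sclr{x^*}{S_n}$ is centred under $\mu^*$ and can be controlled by a Gaussian-type concentration inequality, giving $|\sclr{x^*}{S_n}| \leq C\sqrt{n}$ at sub-exponential cost, while the event $\{\tau_K > n\}$ is handled by comparing the projection of the walk onto $H$ with a lower-dimensional problem in the face $K \cap H$, where one can reduce to a driftless situation via a further exponential tilt and proceed by induction on $\dim H$. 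The hypothesis $x \in K_\delta$ is essential here: it provides the buffer from the boundary that allows fluctuations of order $\sqrt{n}$ in the direction $x^*$ to be absorbed without forcing an immediate exit from $K$.
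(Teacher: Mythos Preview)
Your upper bound and the derivation that $m^* := \nabla L_\mu(x^*)/L_\mu(x^*)$ lies in $K$ with $\sclr{x^*}{m^*} = 0$ are correct and match the paper exactly (this is Lemma~\ref{driftattheminimum}). The divergence is in how the lower bound is obtained.

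The paper does not split into $x^* \in \open{(K^*)}$ versus $x^* \in \partial K^*$. It proves a single auxiliary estimate (Theorem~\ref{theorem_cras_extended}): if $\mu_*$ is square-integrable, satisfies~\ref{H1}, and has drift $m_* \in K$, then for any $v \perp m_*$ there exist $\alpha > 0$ and $\delta \geq 0$ with
\[
\lim_{n\to\infty} \PP^x_{\mu_*}\bigl[\tau_K > n,\ |\sclr{v}{S_n}| \leq \alpha\sqrt{n}\bigr]^{1/n} = 1, \qquad \forall x \in K_\delta.
\]
Taking $v = x^*$ and inserting the event $\{|\sclr{x^*}{S_n}| \leq \alpha\sqrt{n}\}$ into Cram\'er's formula gives $e^{-\sclr{x^*}{S_n}} \geq e^{-\alpha\sqrt{n}}$ there, and the lower bound follows at once. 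The proof of Theorem~\ref{theorem_cras_extended} first pushes the walk a distance of order $\sqrt{n}$ into $\open{K}$ in $O(\sqrt{n})$ steps (cost $\gamma^{\sqrt{n}}$, sub-exponential), then invokes the functional central limit theorem: from a rescaled starting point in $\open{K}$, the probability that the limiting Brownian motion stays in a small ball---hence in $\open{K}$ and with $|\sclr{v}{\cdot}|$ small---on $[0,1]$ is bounded away from zero. No case distinction, no local limit estimates, and no induction on dimension are needed.

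Your boundary case contains a genuine gap. The splitting $\RR^d = H \oplus \RR x^*$ with $H = (x^*)^\perp$ does isolate the scalar $\sclr{x^*}{S_n}$, but it does \emph{not} decouple the constraint $\{S_k \in K\ \forall k \leq n\}$. The cone $K$ is not a product along this decomposition: $K \cap H$ is a face of $K$ with empty interior in $H$, while the orthogonal projection of $K$ onto $H$ can be much larger (possibly all of $H$). There is no cone $K' \subset H$ for which ``$S_k \in K$'' is equivalent, or even usefully comparable, to ``$P_H S_k \in K'$ together with a one-dimensional condition on $\sclr{x^*}{S_k}$''. Consequently the proposed ``lower-dimensional problem in the face $K \cap H$'' is not well posed, the further exponential tilt has no clear target measure, and the induction on $\dim H$ never gets off the ground. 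As a secondary point, your interior case leans on the Denisov--Wachtel local limit theorem, which requires structural hypotheses on the walk and the cone beyond~\ref{H1} and~\ref{H2}; the paper's argument needs only square integrability of $\mu_*$, which is already guaranteed by~\ref{H2}.
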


For a large class of random walks and cones, Theorem \ref{maintheorem} gives the universal recipe to compute the exponential decay of the non-exit probability. Notice that the latter is independent of the starting point $x$. This is not the case when \ref{H2} is not satisfied and we shall illustrate this phenomenon in Section \ref{sec:walks_support_half-space}, with the walks in the quarter-plane having transition probabilities as in Figure \ref{fig:degenerated_walks}.

Let us point out that, in general, there is no explicit link between the position of the drift $m$ of the random walk (if it exists), the position of $x^*$ and the value $L_{\mu}(x^*)$ of the decreasing rate, except in the case where $m$ belongs to the cone $K$. As shown in the next lemma, the fact that $m\in K$ is a necessary and sufficient condition for having $L_{\mu}(x^*)=1$ (i.e., a non-exponential decay of the non-exit probability).

\begin{lemma}
\label{Lxet=1} 
Assume \ref{H1} and \ref{H2}. Then $L_{\mu}(x^*)=1$ if and only if $x^*=0$. In addition,
if the drift $m=\int_{\mathbb R^d} y \mu(\textnormal{d}y)$ exists (i.e., if $\mu$ admits a moment of order $1$), then $m$ belongs to $K$ if and only if $x^*=0$.
\end{lemma}

Theorem \ref{maintheorem} in itself does not provide any explicit value for the constant $\delta$, but such a value can be found {a posteriori} thanks to the following: 

\begin{proposition}
\label{securitycone}
The statement in Theorem~\ref{maintheorem} holds for any $\delta\geq 0$ for which there exists $n_0\geq 1$ such that
\begin{equation*}
     \PP^0_{\mu}[\tau_{K_{-\delta}}>n_0, S_{n_0}\in \open{K}]>0.
\end{equation*}     
\end{proposition}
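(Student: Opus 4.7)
The upper bound $\limsup_n \PP^x_\mu[\tau_K>n]^{1/n} \leq L_\mu(x^*)$ is free: by the Cram\'er inequality \eqref{eq:cramer_formula} applied with $z = x^* \in K^*$ (as in Subsection \ref{subsec:discussion}), $\PP^x_\mu[\tau_K > n] \leq L_\mu(x^*)^n e^{\sclr{x^*}{x}}$ for every $x \in \RR^d$. Only the matching lower bound on $K_\delta$ needs attention. My first move is a reduction to the canonical starting point $\delta v$: writing $x = z + \delta v$ with $z \in K$, the inclusion $\{\delta v + S_k \in K,\, \forall k \leq n\} \subseteq \{x + S_k \in K,\, \forall k \leq n\}$ holds because $K+K \subseteq K$, so translating gives $\PP^x_\mu[\tau_K > n] \geq \PP^{\delta v}_\mu[\tau_K > n]$. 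It is therefore enough to establish the lower bound at $x = \delta v$.

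Next I would bridge from $\delta v$ to $K_{\delta_1}$, where $\delta_1 \geq 0$ is the constant supplied by Theorem \ref{maintheorem}. Using the identity $K_{-\delta} + \delta v = K$, the positivity hypothesis translates into
\[
\PP^{\delta v}_\mu\big[\tau_K > n_0,\; S_{n_0} \in \delta v + \open{K}\big] \;\geq\; \PP^0_\mu\big[\tau_{K_{-\delta}} > n_0,\; S_{n_0} \in \open{K}\big] \;>\; 0.
\]
Since $(K_\eta)_{\eta > 0}$ is nested and exhausts $\open{K}$, monotone convergence furnishes an $\eta > 0$ for which $p := \PP^0_\mu[\tau_{K_{-\delta}} > n_0, S_{n_0} \in K_\eta] > 0$; by the same translation, $\PP^{\delta v}_\mu[\tau_K > n_0, S_{n_0} \in K_{\delta + \eta}] \geq p$. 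Iterating this Markov step $k$ times (with $\delta$ replaced by $\delta + j\eta$ at iteration $j$, which only enlarges $K_{-\delta}$ and hence the positivity event), the walk stays in $K$ and lands in $K_{\delta + k \eta}$ with probability at least $p^k$ after $k n_0$ steps. Choosing $k$ so that $\delta + k \eta \geq \delta_1$ deposits the walk in $K_{\delta_1}$; a final Markov concatenation for the remaining $n - k n_0$ steps, followed by taking $n$-th roots, absorbs the constant $p^k$ and produces $\liminf_n \PP^{\delta v}_\mu[\tau_K > n]^{1/n} \geq L_\mu(x^*)$.

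The main obstacle in this concatenation is that Theorem \ref{maintheorem} supplies only a \emph{pointwise} exponential rate on $K_{\delta_1}$, while the Markov inequality effectively integrates this rate over the landing set $K_{\delta + k\eta}$. A clean workaround is to further trim the landing event to a small relatively compact neighborhood of a fixed reference point $y_\star \in \open{K_{\delta_1}}$ --- still on a positive-probability event, by the same monotone argument --- and to apply Theorem \ref{maintheorem} at the single point $y_\star$. The bridging step becomes slightly more delicate, but the exponential rate is preserved throughout.
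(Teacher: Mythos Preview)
Your approach is essentially the paper's: translate the positivity hypothesis by $\delta v$, use $K+K\subset K$ and monotone convergence to land in some $K_{\delta+\eta}$ with probability $p>0$, iterate to reach $K_{\delta_1}$, then concatenate via Markov with the conclusion of Theorem~\ref{maintheorem}. The induction step and the bookkeeping are correct.

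The final paragraph, however, manufactures an obstacle that is not there and then proposes a workaround that does not clearly work. Trimming the landing event to a neighborhood of a single reference point $y_\star$ does not help: Theorem~\ref{maintheorem} at $y_\star$ says nothing about $\PP^y_\mu[\tau_K>n]$ for $y$ merely close to $y_\star$, so you are back to the same uniformity question over that neighborhood. The actual fix is the one you already used in your first reduction. For every $y\in K_{\delta_1}$ write $y=w+\delta_1 v$ with $w\in K$; then by inclusion of events
\[
\PP^{y}_\mu[\tau_K>n-kn_0]\;\geq\;\PP^{\delta_1 v}_\mu[\tau_K>n-kn_0],
\]
so the infimum over the landing set is attained at the single point $\delta_1 v$, and Theorem~\ref{maintheorem} applied there gives the lower bound directly. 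This is exactly how the paper handles the concatenation: after pushing the walk into $K_{\delta_0}$, it bounds $\inf_{y\in K_{\delta_0}}\PP^y_\mu[\tau_K>n-m]$ below by $\PP^{\delta_0 v}_\mu[\tau_K>n-m]$ and invokes Theorem~\ref{maintheorem} at $\delta_0 v$. Once you make this replacement, your argument is complete and coincides with the paper's.
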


Proofs of Theorem~\ref{maintheorem},  Lemma \ref{Lxet=1} and Proposition~\ref{securitycone} are postponed to Subsection \ref{subsec:proof_proof}.

In order to illustrate Theorem \ref{maintheorem}, it is interesting to compare its content with the corresponding result known for Brownian motion with drift, in the light of the recent paper \cite{GaRa13}. It is proved there that, for Brownian motion $(B_t)_{t\geq0}$ with drift $a\in\mathbb R^d$, the non-exit probability admits the asymptotics (in the continuous case, the exit time from $K$ is defined by $\tau_K:=\inf\{t>0 : B_t\notin K\}$)
\begin{equation}
      \PP^x[\tau_K>t]=\kappa h(x) t^{-\alpha} e^{-\gamma t}(1+o(1)),\quad t\to\infty,
\end{equation}
where  $\gamma:=d(a,K)^2/2$. Therefore 
\begin{equation*} 
     \lim_{t\to\infty}\PP^x[\tau_K>t]^{1/t}=e^{-d(a,K)^2/2}.
\end{equation*}
Let us compare with the value given by Theorem \ref{maintheorem} for the random walk $(B_n)_{n\geq 0}$. Its distribution $\mu$ is Gaussian with mean $a$ and identity variance-covariance matrix; therefore
\begin{equation*}
     L_{\mu}(x)= e^{\vert x\vert^2/2+\sclr{x}{a}}.
\end{equation*}
The minimum on $K^*$ of $\vert x\vert^2/2+\sclr{x}{a}$ is obviously the minimum on the {\em polar} cone $K^\sharp=-K^*$ of
\begin{equation*}
\vert x\vert^2/2-\sclr{x}{a}=\vert x-a\vert^2/2-\vert a\vert^2/2.
\end{equation*}
It is reached at $x = p_{K^\sharp}^\perp (a)$, the orthogonal projection of $a$ on $K^\sharp$, and an easy computation shows that the minimum value is
\begin{equation*}
   \vert p_{K^\sharp}^\perp (a)-a\vert^2/2-\vert a\vert^2/2 = -\vert a-p_K^\perp(a)\vert^2/2=-d(a,K)^2/2,
\end{equation*}
where we have used Moreau's decomposition theorem which asserts that, for any convex cone $K$,  $a$ is the orthogonal sum of $p_{K}^\perp (a)$ and $p_{K^\sharp}^\perp (a)$. We thus have 
\begin{equation*}
     \min_{K^*}L_{\mu}=e^{-d(a,K)^2/2},
\end{equation*}
which means that the exponential decreasing rate is the same for Brownian motion $(B_t)_{t\geq 0}$ and for the ``sampled'' Brownian motion $(B_n)_{n\geq 0}$, as one could expect.

  \unitlength=0.6cm
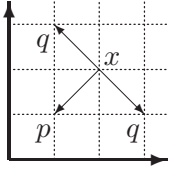
\begin{figure}[t]
  \begin{center}
\begin{tabular}{ccccc}
    \hspace{-0.9cm}
        \begin{picture}(4,4.5)
    \thicklines
    \put(1,1){{\vector(1,0){3.5}}}
    \put(1,1){\vector(0,1){3.5}}
    \thinlines
    \put(3,3){\vector(1,-1){1}}
    \put(3,3){\vector(-1,1){1}}
    \put(3,3){\vector(-1,-1){1}}
    \linethickness{0.1mm}
    \put(1,2){\dottedline{0.1}(0,0)(3.5,0)}
    \put(1,3){\dottedline{0.1}(0,0)(3.5,0)}
    \put(1,4){\dottedline{0.1}(0,0)(3.5,0)}
    \put(2,1){\dottedline{0.1}(0,0)(0,3.5)}
    \put(3,1){\dottedline{0.1}(0,0)(0,3.5)}
    \put(4,1){\dottedline{0.1}(0,0)(0,3.5)}
    \put(1.6,3.5){$q$}
    \put(1.6,1.5){$p$}
    \put(3.6,1.5){$q$}
    \put(3.1,3.1){$x$}
    \end{picture}
    \end{tabular}
  \end{center}
  \vspace{-4mm}
\caption{Random walks considered in Section \ref{sec:walks_support_half-space} ($p+2q=1$, $p,q>0$), for different starting points $x$}
\label{fig:degenerated_walks}
\end{figure}

\subsection{Plan of the paper}

The rest of our article is organized as follows: In Section~\ref{sec:proof_main_theorem} we prove Theorem~\ref{maintheorem}. In Section \ref{sec:enumeration} we present an important consequence of Theorem \ref{maintheorem} in the counting of walks in orthants (a topical domain in enumerative combinatorics), see Corollaries \ref{cor:enumeration} and \ref{cor:enumeration2}. In Section \ref{sec:walks_support_half-space} we consider the walks of Figure \ref{fig:degenerated_walks}, for which we prove that contrary to the walks satisfying hypothesis \ref{H2}, the exponential decay depends on the starting point $x$. Finally, in Section \ref{sec:appendix} we prove the non-exponential decay of the non-exit probability for random walks with drift in the cone (a refinement of a theorem of \cite{Gar07}), which is needed for proving our main result.

\section{Proof of the main results}
\label{sec:proof_main_theorem}

This section is organized as follows: In Subsection \ref{subsec:Cramer}, we review some elementary properties of the Laplace transform and present Cram\'er's formula. Then, we prove Theorem~\ref{maintheorem}, Lemma \ref{Lxet=1} and Proposition~\ref{securitycone} in Subsection \ref{subsec:proof_proof}. In Subsection \ref{subsec:geometric_interpretation} we provide a geometric interpretation of our main assumption on the random walk distribution.

\subsection{Cram\'er's formula}
\label{subsec:Cramer}

The Laplace transform of~a~probability distribution $\mu$ is the function $L_{\mu}$ defined for $x\in\RR^d$ by
\begin{equation*}
     L_{\mu}(x):=\int_{\RR^d}e^{\sclr{x}{y}}\mu(\text{d}y).
\end{equation*}
It is clearly a convex function. If $L_{\mu}$ is finite in a neighborhood of the origin, say $\close{B(0,r)}$, then it is well known that $L_{\mu}$ is (infinitely) differentiable in $B(0,r)$, and that its partial derivatives are given by
\begin{equation*}
     \frac{\partial L_{\mu}(x)}{\partial x_i}=\int_{\RR^d}y_i e^{\sclr{x}{y}}\mu(\text{d}y),\quad \forall i\in\llbracket1,d\rrbracket.
\end{equation*}
Therefore, the expectation of $\mu$ is equal to the gradient of $L_{\mu}$ at the origin: $\EE[\mu]=\nabla L_{\mu}(0)$. Notice that $\mu$ is centered if and only if $0$ is a critical point of $L_{\mu}$.

Suppose now that $z$ is a point where $L_{\mu}$ is finite, and let $\mu_z$ denote the probability measure defined by
\begin{equation}
\label{eq:def_mu_zero}
     \mu_z(\text{d}y):=\frac{e^{\sclr{z}{y}}}{L_{\mu}(z)}\mu(\text{d}y).
\end{equation}
The Laplace transform of $\mu_z$ is related to that of $\mu$ by the formula
\begin{equation*}
L_{\mu_{z}}(x)=\frac{L_{\mu}(z+x)}{L_{\mu}(z)}.
\end{equation*}
If in addition $L_{\mu}$ is finite in some ball $\close{B(z,r)}$, then $L_{\mu_{z}}$ is finite in $\close{B(0,r)}$. By consequence, $L_{\mu_{z}}$ is differentiable in $B(0,r)$ and
\begin{equation*}
     \EE[\mu_z]=\nabla L_{\mu_z}(0)=\frac{\nabla L_{\mu}(z)}{L_{\mu}(z)}.
\end{equation*}

The distribution of the random walk under $\PP_{\mu_z}$ is linked to the initial distribution by the following:
\begin{lemma}[Cram\'er's formula]
\label{lem:cramer}
For any measurable and positive function $F:\RR^n\to\RR$, we have
\begin{equation*}
\EE_{\mu}^x[F(S_1,\ldots,S_n)]=L_{\mu}(z)^ne^{\sclr{z}{x}}\EE_{\mu_z}^x[e^{-\sclr{z}{S_n}}F(S_1,\ldots,S_n)].
\end{equation*}
\end{lemma}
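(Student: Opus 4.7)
The plan is to establish the identity by a direct change-of-measure computation, simply unwinding the definitions; no clever step is needed. Let $X_i := S_i - S_{i-1}$ for $i = 1,\ldots,n$ with $S_0 = x$, so that under $\PP^x_{\mu}$ the increments $X_1,\ldots,X_n$ are i.i.d.\ with distribution $\mu$, while under $\PP^x_{\mu_z}$ they are i.i.d.\ with distribution $\mu_z$. Writing the left-hand side as an iterated integral over the increments gives
\begin{equation*}
\EE^x_{\mu}[F(S_1,\ldots,S_n)] = \int_{(\RR^d)^n} F(x+x_1,\, x+x_1+x_2,\, \ldots,\, x+x_1+\cdots+x_n)\, \mu(\mathrm{d}x_1)\cdots \mu(\mathrm{d}x_n).
\end{equation*}

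The definition \eqref{eq:def_mu_zero} of $\mu_z$ inverts to $\mu(\mathrm{d}y) = L_{\mu}(z)\, e^{-\sclr{z}{y}}\,\mu_z(\mathrm{d}y)$ (this requires only $L_\mu(z)<\infty$, which is built into the very definition of $\mu_z$). Substituting this into each of the $n$ factors produces the constants $L_{\mu}(z)^n$ and, using $\sclr{z}{x_1+\cdots+x_n} = \sclr{z}{S_n-x}$, the exponential weight $e^{\sclr{z}{x}}\,e^{-\sclr{z}{S_n}}$ in front of $F$. The remaining product measure $\prod_{i=1}^n \mu_z(\mathrm{d}x_i)$ is exactly the joint law of $(X_1,\ldots,X_n)$ under $\PP^x_{\mu_z}$, so the integral collapses to $L_{\mu}(z)^n e^{\sclr{z}{x}}\,\EE^x_{\mu_z}[e^{-\sclr{z}{S_n}} F(S_1,\ldots,S_n)]$, which is the desired formula.

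There is essentially no obstacle to overcome: positivity of $F$ legitimises the application of Tonelli's theorem throughout (both sides lie in $[0,\infty]$, and the identity is trivially true when either side is infinite). If one prefers, the same conclusion can be obtained by induction on $n$, starting from the one-step case
\begin{equation*}
\EE^x_{\mu}[G(S_1)] = \int G(x+y)\,\mu(\mathrm{d}y) = L_{\mu}(z)\,e^{\sclr{z}{x}}\int G(x+y)\,e^{-\sclr{z}{x+y}}\,\mu_z(\mathrm{d}y),
\end{equation*}
and then conditioning on $S_1$ at each step; but the one-shot computation above is shorter.
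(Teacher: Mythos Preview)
Your proof is correct and follows essentially the same approach as the paper: both write the law of the increments as a product measure, use the identity $\mu(\mathrm{d}y)=L_{\mu}(z)e^{-\sclr{z}{y}}\mu_z(\mathrm{d}y)$ on each factor, and collect the resulting constants and the exponential weight $e^{-\sclr{z}{S_n}}$. The paper simply states the product-measure identity $\mu^{\otimes n}=L_{\mu}(z)^n e^{-\sclr{z}{\sum y_i}}\mu_z^{\otimes n}$ and declares the rest straightforward; your version makes the same computation explicit.
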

\begin{proof} It follows directly from the definition~\eqref{eq:def_mu_zero} of $\mu_z$ that
\begin{equation*}
\mu^{\otimes n}(\text{d}y_1,\ldots,\text{d}y_n)=L_{\mu}(z)^n e^{-\sclr{z}{\sum_{i=1}^ny_i}}\mu_z^{\otimes n}(\text{d}y_1,\ldots,\text{d}y_n).
\end{equation*}
The conclusion is then straightforward.
\end{proof}
Applied to the function $F(s_1,\ldots,s_n)=\Pi_{i=1}^n \mathbbm{1}_{K}(s_i)$, Cram\'er's formula reads
\begin{equation*} 
\PP_{\mu}^x[\tau_K>n]=L_{\mu}(z)^ne^{\sclr{z}{x}}\EE_{\mu_z}^x[e^{-\sclr{z}{S_n}},\tau_K>n],
\end{equation*}
and this implies that for all $x\in\RR^d$,
\begin{equation}
\label{eq:upper_bound}
\limsup_{n\to\infty}\PP_{\mu}^x[\tau_K>n]^{1/n}\leq \inf_{K^*}L_{\mu},
\end{equation}
as already observed at the end of Subsection \ref{subsec:discussion}.

\subsection{Proofs of Theorem~\ref{maintheorem}, Lemma \ref{Lxet=1} and Proposition~\ref{securitycone}}
\label{subsec:proof_proof}

In order to obtain the lower bound for the non-exit probability, we shall use Cram\'er's formula at $z=x^*$. 
The following lemma gives some useful information on the position of the drift of the random walk under the measure changed at $x^*$.

\begin{lemma}
\label{driftattheminimum}
Suppose $\mu$ satisfies \ref{H1} and \ref{H2}. Then, the gradient $\nabla L_{\mu}(x^*)$ belongs to $K$ and is orthogonal to $x^*$.
\end{lemma}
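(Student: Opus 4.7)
The plan is to exploit the two structural features of the dual cone $K^*$ --- namely, that it is stable under positive scaling and that it is stable under addition of its elements --- by differentiating $L_\mu$ along two types of admissible directions at $x^*$. These derivatives exist because, by hypothesis \ref{H2}, $L_\mu$ is finite on the open neighborhood $V$ of $x^*$, and hence, as recalled in Subsection \ref{subsec:Cramer}, is infinitely differentiable there.

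For the orthogonality relation $\langle \nabla L_\mu(x^*),x^*\rangle=0$, I would look at the one-variable function $\varphi(t):=L_\mu(tx^*)$. Since $K^*$ is a cone, $tx^*\in K^*$ for every $t\geq 0$, and by openness of $V$ the point $tx^*$ still lies in $V$ for $t$ in some open interval $I$ around~$1$. Thus $\varphi$ is well-defined and differentiable on $I$, and hypothesis \ref{H2} forces it to attain a local minimum at the interior point $t=1$. The vanishing of $\varphi'(1)=\langle\nabla L_\mu(x^*),x^*\rangle$ is precisely the desired orthogonality.

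For the inclusion $\nabla L_\mu(x^*)\in K$, I would invoke the bipolar identity $K^{**}=K$, valid since $K$ is a closed convex cone, which reduces the claim to establishing that $\langle\nabla L_\mu(x^*),z\rangle\geq 0$ for every $z\in K^*$. Fix such a $z$ and set $\psi(t):=L_\mu(x^*+tz)$. Since $K^*$ is a convex cone, $x^*+tz\in K^*$ for every $t\geq 0$, and for $t$ in some interval $[0,\varepsilon)$ the point $x^*+tz$ remains in $V$. The minimality assumption in \ref{H2} then gives $\psi(t)\geq\psi(0)$ on $[0,\varepsilon)$, so differentiating from the right at $0$ yields $\langle\nabla L_\mu(x^*),z\rangle=\psi'(0^+)\geq 0$, as required.

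I do not anticipate any real obstacle: the argument is just a direct application of first-order optimality conditions on a convex cone, combined with the bipolar theorem. The only point requiring mild care is to verify that the perturbations $tx^*$ and $x^*+tz$ stay inside $V$ for the relevant range of $t$; this follows from the openness of $V$ and is precisely what makes the local (rather than global) minimality stated in \ref{H2} sufficient.
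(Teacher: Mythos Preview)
Your proof is correct and follows essentially the same approach as the paper: both invoke the bipolar identity $(K^*)^*=K$ and test first-order optimality along rays $t\mapsto x^*+tz$ for $z\in K^*$ to get $\langle\nabla L_\mu(x^*),z\rangle\geq 0$, and along the scaling direction to obtain the orthogonality to $x^*$. The only cosmetic difference is that you parametrize the scaling direction as $t\mapsto L_\mu(tx^*)$ with minimum at $t=1$, whereas the paper writes it as $t\mapsto L_\mu((1+t)x^*)$ with minimum at $t=0$; these are the same argument up to a shift.
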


\begin{proof} We first notice that under \ref{H2}, the Laplace transform is finite, hence differentiable, in some neighborhood of $x^*$. It is well known that the equality $(K^*)^*=K$ holds for any closed convex cone, see \cite[Theorem 14.1]{Ro70}. Hence $\nabla L_{\mu}(x^*)$ belongs to $K$ if and only if
\begin{equation*}
     \sclr{\nabla L_{\mu}(x^*)}{y}\geq 0,\quad\forall y\in K^*.
\end{equation*}
So, let $y\in K^*$. Since $K^*$ is a convex cone and $x^*\in K^*$, $x^*+ty$ also belongs to $K^*$ for all $t\geq 0$. Hence, 
thanks to \ref{H2}, the function
\begin{equation*}
     t\in[0,\infty) \mapsto f_y(t):=L_{\mu}(x^*+ty)
\end{equation*}
is differentiable in some neighborhood of $t=0$ and reaches its minimum at $t=0$. This implies
\begin{equation*}
     \sclr{\nabla L_{\mu}(x^*)}{y}=f'_y(0)\geq 0.
\end{equation*}     
Now, if we take $y=x^*$, we have a stronger result since $x^*+tx^*=(1+t)x^*$ belongs to $K^*$ for all $t\geq -1$: the function $f_{x^*}$ is differentiable in some open neighborhood of $t=0$ and has a local minimum point on $[-1,\infty)$ at $t=0$, hence
\begin{equation*}
     \sclr{\nabla L_{\mu}(x^*)}{x^*}=f'_{x^*}(0)=0.
\end{equation*}     
The proof is completed.
\end{proof}

We are now in position to conclude the proof of Theorem \ref{maintheorem}.

\begin{proof}[Proof of Theorem~\ref{maintheorem}]
As already observed, $L_{\mu}(x^*)$ is an upper bound for the exponential decreasing rate
\begin{equation*}
     \limsup_{n\to\infty}\PP^x_{\mu}[\tau_K>n]^{1/n},
\end{equation*}     
and it remains to prove that it is also the right lower bound.
By performing the Cram\'er transformation at $x^*$, we get
\begin{align*}
\label{cramerattheminimum}
     \PP^x_{\mu}[\tau_K>n]&=\rho^n e^{\sclr{x^*}{x}}\EE^{x}_{\mu_*}[e^{-\sclr{x^*}{S_n}}, \tau_K>n]\\
		&\geq \rho^n e^{\sclr{x^*}{x}}e^{-\alpha\sqrt{n}}\PP^{x}_{\mu_*}[\vert\sclr{x^*}{S_n}\vert\leq\alpha\sqrt{n}, \tau_K>n],
\end{align*}
where $\rho=L_{\mu}(x^*)$, $\mu_*(\text{d}y)=\rho^{-1}e^{\sclr{x^*}{y}}\mu(\text{d}y)$, and $\alpha$ is any positive number.
Notice that $\mu_*$ is truly $d$-dimensional, because $\mu$ has this property and both measures have the same support.
Note also that by assumption \ref{H2}, $L_{\mu_*}(x)=\rho^{-1}L_{\mu}(x^*+x)$ is finite in some neighborhood of $x=0$, and therefore $\mu_*$ has all moments.
Since the new drift
\begin{equation*}
     \EE[\mu_*]=m_*=\rho^{-1} \nabla L_{\mu}(x^*)
\end{equation*}
belongs to $K$ and is orthogonal to $x^*$ (by Lemma \ref{driftattheminimum}), it follows from Proposition~\ref{theorem_cras_extended} in Section~\ref{sec:appendix} that there exist $\alpha>0$ and $\delta\geq 0$ such that
\begin{equation*}
     \lim_{n\to\infty}\PP^{x}_{\mu_*}[\vert\sclr{x^*}{S_n}\vert\leq\alpha\sqrt{n}, \tau_K>n]^{1/n}=1,\quad \forall x\in K_{\delta}.
\end{equation*}
Hence, we reach the conclusion that
\begin{equation*}
     \liminf_{n\to\infty}\PP^x_{\mu}[\tau_K>n]^{1/n}\geq \rho
\end{equation*}
for all $x\in K_{\delta}$, and the theorem is proved.
\end{proof}


We now give the proof of Lemma \ref{Lxet=1}, which provides a necessary and sufficient condition for having $L_{\mu}(x^* )=1$. 

\begin{proof}[Proof of Lemma \ref{Lxet=1}]
Assume that $L_{\mu}(x^* )=1$ and at the same time $x^*\not=0$. It is well known that $L_{\mu}$ is then finite on $[0, x^*]$, thus strictly convex on that segment (see Subsection \ref{subsec:geometric_interpretation}). Since $L_{\mu}(0)=L_{\mu}(x^*)=1$ it follows that $L_{\mu}(x)<1$ for all $x\in (0, x^*)$. But this open interval is a subset of $K^*$, hence this contradicts the hypothesis \ref{H2} asserting that $x^*$ is a local minimum point on $K^*$. Conversely, that $x^*=0$ implies $L_{\mu}(x^*)=1$ is trivial.

We now turn to the second part of the lemma. First, we know from Lemma \ref{driftattheminimum} that $\nabla L_{\mu}(x^*)$ belongs to $K$. So, if $x^*=0$, then $m=\nabla L_{\mu}(0)$ belongs to $K$ (here we do not need to assume the existence of $m$: it exists because \ref{H2} at $x^*=0$ ensures that $L_{\mu}$ is infinitely differentiable in some neighborhood of $0$, and therefore $\mu$ has all moments). Conversely, assume that $m$ exists and belongs to $K$ and suppose that $x^*\not=0$. Consider the function $g(t)=L_{\mu}(tx^*)$, which is finite on $[0,1]$. Under the assumption $\int \vert y\vert\mu(\text{d}y)<\infty$, it follows by standard arguments that $g(t)$ has a right derivative at $t=0$ given by $g'(0+)=\sclr{x^*}{m}$. Since $m$ belongs to $K$ and $x^*$ to  $K^*$, this derivative is non-negative. So, $g(t)$ must be increasing since it is strictly convex. Thus $L_{\mu}(tx^*)=g(t)<g(1)=L_{\mu}(x^*)$ for all $t\in[0,1)$, and $x ^*$ cannot be a local minimum.
\end{proof}

To conclude this section, we explain how to find a $\delta\geq 0$ for which the statement of Theorem~\ref{maintheorem} holds.

\begin{proof}[Proof of Proposition~\ref{securitycone}]
Recall that $v\in\open{K}$ is fixed and $K_{\delta}=K+\delta v$. We assume that there exist $\delta\geq 0$ and $k\geq 1$ such that
\begin{equation*}
     \PP^0_{\mu}[\tau_{K_{-\delta}}>k, S_{k}\in \open{K}]>0.
\end{equation*}
Therefore, we can find $\epsilon>0$ such that 
\begin{equation*}
     \PP^0_{\mu}[\tau_{K_{-\delta}}>k, S_{k}\in K_{\epsilon}]=\gamma>0,
\end{equation*}
and since $K$ is a convex cone, it satisfies the relation $K+K\subset K$, thus
\begin{equation*}
     \PP^x_{\mu}[\tau_{K}>k, S_{k}-x\in K_{\epsilon}]\geq \gamma,
\end{equation*}
for all $x\in K_{\delta}$ (by inclusion of events). From this, we shall deduce by induction that
\begin{equation}\label{pousselamarche}
p_\ell:=\PP_{\mu}^x[\tau_K>\ell k,S_{\ell k}-x\in K_{\ell\epsilon}]\geq \gamma^\ell,
\end{equation}
for all $\ell\geq 1$ and $x\in K_{\delta}$. Indeed, by the Markov property of the random walk,
\begin{align*}
p_{\ell+1} & \geq \EE_{\mu}^x[\tau_K>\ell k,S_{\ell k}-x\in K_{\ell\epsilon}, \PP_{\mu}^{S_{\ell k}}[\tau_K>k,S_{k}-x\in K_{(\ell+1)\epsilon}]]\\
& \geq p_\ell\cdot\inf_{y} \PP_{\mu}^{y}[\tau_K>k,S_{k}-x\in K_{(\ell+1)\epsilon}],
\end{align*}
where the infimum is taken over all $y\in K$ such that $y-x\in K_{\ell\epsilon}$. Noting that $y-x\in K_{\ell\epsilon}$ and $S_k-y\in K_{\epsilon}$ imply
$S_k-x\in K_{(\ell+1)\epsilon}$, we obtain
\begin{equation*}
      p_{\ell+1}\geq p_\ell\cdot\inf_{y} \PP_{\mu}^{y}[\tau_K>k,S_{k}-y\in K_{\epsilon}].
\end{equation*}
But $x\in K_{\delta}$ and $y-x\in K_{\ell\epsilon}\subset K$ imply $y\in K_{\delta}$. Hence $p_{\ell+1}\geq p_\ell\cdot \gamma$ and~\eqref{pousselamarche} is proved.

Now Theorem~\ref{maintheorem} asserts the existence of some $\delta_0\geq 0$ such that
\begin{equation*}
     \lim_{n\to\infty}\PP^y_{\mu}[\tau_K>n]^{1/n}=L_{\mu}(x^*),
\end{equation*}
for all $y\in K_{\delta_0}$, and we want to prove that the result also holds for $x\in K_{\delta}$. To do this, we shall simply use~\eqref{pousselamarche} in order to push the walk from $K_{\delta}$ to $K_{\delta_0}$. More precisely, choose $\ell\geq 1$ such that $\delta+\ell\epsilon\geq \delta_0$. Then for all $x\in K_{\delta}$ the inclusion $x+K_{\ell\epsilon}\subset K_{\delta_0}$ holds, and thanks to~\eqref{pousselamarche},
\begin{equation*}
     \PP_{\mu}^x[\tau_K>m,S_{m}\in K_{\delta_0}]\geq \gamma^\ell,
\end{equation*}
for $m=k\ell$. By the Markov property, for all $n\geq m$, we have
\begin{align*}
\PP_{\mu}^x[\tau_K>n] & \geq \EE_{\mu}^x[\tau_K>m, S_{m}\in K_{\delta_0},\PP_{\mu}^{S_{m}}[\tau_K>n-m] ]\\
&\geq \gamma^\ell \cdot \inf_{y\in K_{\delta_0}}\PP_{\mu}^{y}[\tau_K>n-m]\\ 
&\geq \gamma^\ell \cdot \PP_{\mu}^{\delta_0v}[\tau_K>n-m], 
\end{align*}
where the last inequality follows by inclusion of events. This implies immediately
\begin{equation*}
     \liminf_{n\to\infty}\PP_{\mu}^x[\tau_K>n]^{1/n}\geq L_{\mu}(x^*).
\end{equation*}
But since the inequality
\begin{equation*}
     \limsup_{n\to\infty}\PP_{\mu}^x[\tau_K>n]^{1/n}\leq L_{\mu}(x^*)
\end{equation*}     
holds for all $x$ (see~\eqref{eq:upper_bound}), Proposition~\ref{securitycone} is proved. 
\end{proof}

\subsection{Geometric interpretation of condition \ref{H2}}
\label{subsec:geometric_interpretation}
The aim of this subsection is to give a geometric interpretation of condition \ref{H2} under some additional condition on the exponential moments. Throughout this section, we fix a closed convex cone $C$ and assume that $\mu$ has all $C$-exponential moments, that is, 
\begin{equation*}
     L_{\mu}(x)<\infty,\quad\forall x\in C.
\end{equation*}    
The strict convexity of the exponential function ensures that
\begin{equation}
\label{eq:eq_not_eq}
     L_{\mu}(ax_1+bx_2)\leq aL_{\mu}(x_1)+bL_{\mu}(x_2),
\end{equation}
for all $x_1\not=x_2\in C$ and $a,b>0$ with $a+b=1$, and that equality occurs if and only if 
\begin{equation*}
     \mu((x_1-x_2)^{\perp})=1,
\end{equation*} 
where $(x_1-x_2)^{\perp}$ denotes the hyperplane orthogonal to $x_1-x_2$. Thus, if~\ref{H1} is satisfied, the equality in \eqref{eq:eq_not_eq} never occurs and $L_{\mu}$ is strictly convex on $C$. 

Let $\SS^{d-1}$ denote the unit sphere of $\RR^d$. Standard arguments involving the convexity of $L_{\mu}$ and the compactness of $C\cap \SS^{d-1}$ show that $L_{\mu}(x)$ goes to infinity as $\vert x\vert\to\infty$ uniformly on $C$ if and only if 
\begin{equation}
\label{eq:coercive_condition}
     \lim_{t\to \infty}L_{\mu}(tu)=\infty,\quad \forall u\in C\cap\SS^{d-1}.
\end{equation}
Hence, the condition~\eqref{eq:coercive_condition} is sufficient for the existence of a global minimum on $C$. Indeed, if it is satisfied, then
there exists $R>0$ such that $L_{\mu}(x)\geq 1$ for all $x\in C$ with $\vert x\vert\geq R$. By continuity, $L_{\mu}$ reaches a minimum on $\close{B(0,R)}\cap C$ which is less than or equal to $1$ (since $L_{\mu}(0)=1$), thus it is a global minimum on $C$. 

The next lemma gives some interesting information on the behavior at infinity of $L_{\mu}$. Recall that $u^-$ denotes the half-space $\{y\in\RR^d : \sclr{u}{y}\leq 0\}$.

\begin{lemma} 
\label{laplace_at_infinity} 
Suppose that $\mu$ has all $C$-exponential moments. For every $u\in C\cap\SS^{d-1}$, the following dichotomy holds:
\begin{enumerate}[label={\rm (\arabic{*})},ref={\rm (\arabic{*})}]
\item If $\mu(u^-)<1$, then 
\begin{equation*}
\lim_{t\to \infty}L_{\mu}(x+tu)=\infty,\quad \forall x\in C.
\end{equation*}
\item If $\mu(u^-)=1$, then
\begin{equation*}
\lim_{t\to \infty}L_{\mu}(x+tu)=\int_{u^{\perp}}e^{\sclr{x}{y}}\mu(\textnormal{d}y),\quad \forall x\in C.
\end{equation*}
\end{enumerate}
\end{lemma}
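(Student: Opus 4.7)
The plan is to write
\begin{equation*}
L_\mu(x+tu)=\int_{\RR^d} e^{\sclr{x}{y}}\,e^{t\sclr{u}{y}}\,\mu(\text{d}y)
\end{equation*}
and exploit the fact that the sign of $\sclr{u}{y}$ completely dictates the behavior of the factor $e^{t\sclr{u}{y}}$ as $t\to\infty$: it blows up on $\{\sclr{u}{y}>0\}$, stays equal to $1$ on $u^{\perp}=\{\sclr{u}{y}=0\}$, and vanishes on $\{\sclr{u}{y}<0\}$. The hypothesis that $\mu$ has all $C$-exponential moments guarantees that $e^{\sclr{x}{y}}$ is $\mu$-integrable (since $x\in C$), which will play the role of an integrable dominating function.

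For part (1), assume $\mu(u^-)<1$, so that the set $A:=\{y:\sclr{u}{y}>0\}$ has positive $\mu$-measure. Since $e^{\sclr{x}{y}}e^{t\sclr{u}{y}}\geq 0$, I would bound
\begin{equation*}
L_\mu(x+tu)\;\geq\;\int_{A} e^{\sclr{x}{y}}\,e^{t\sclr{u}{y}}\,\mu(\text{d}y),
\end{equation*}
and for any sequence $t_n\to\infty$ apply Fatou's lemma on $A$: the integrand tends pointwise to $+\infty$ on $A$, so the liminf of the integrals is at least $\int_A(+\infty)\,\mathrm{d}\mu=+\infty$ because $\mu(A)>0$. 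This yields $L_\mu(x+tu)\to\infty$.

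For part (2), assume $\mu(u^-)=1$, so the integral reduces to an integral over $u^-$. For $y\in u^-$ and $t\geq 0$ we have $e^{t\sclr{u}{y}}\leq 1$, hence the integrand is dominated by $e^{\sclr{x}{y}}$, which is $\mu$-integrable since $L_\mu(x)<\infty$. Moreover, pointwise on $u^-$, $e^{t\sclr{u}{y}}\to \indic{u^{\perp}}(y)$ as $t\to\infty$ (the limit equals $1$ if $\sclr{u}{y}=0$ and $0$ if $\sclr{u}{y}<0$). Dominated convergence then gives
\begin{equation*}
\lim_{t\to\infty}L_\mu(x+tu)=\int_{u^-}e^{\sclr{x}{y}}\indic{u^{\perp}}(y)\,\mu(\text{d}y)=\int_{u^{\perp}}e^{\sclr{x}{y}}\,\mu(\text{d}y),
\end{equation*}
which is the desired identity.

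Neither part is technically difficult; the only point that requires a moment's care is producing an integrable majorant in case~(2), and this is precisely where the assumption $x\in C$ (and hence $L_\mu(x)<\infty$) is used. In case~(1), one might worry about whether the blow-up of $e^{t\sclr{u}{y}}$ on $A$ can be ``canceled'' by the behavior of $e^{\sclr{x}{y}}$, but since $e^{\sclr{x}{y}}>0$ everywhere, no such cancellation is possible and the Fatou argument goes through without subtlety.
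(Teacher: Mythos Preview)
Your proof is correct and follows essentially the same approach as the paper: part~(2) is identical (dominated convergence with majorant $e^{\sclr{x}{y}}$), and in part~(1) the paper simply replaces your Fatou argument by the slightly more explicit step of fixing $\epsilon>0$ with $\mu(\{y:\sclr{u}{y}\geq\epsilon\})>0$ and bounding $L_{\mu}(x+tu)\geq c\,e^{t\epsilon}$. Both arguments rest on the same observation that the integrand blows up on a set of positive $\mu$-measure.
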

\begin{proof}
If $\mu(u^-)<1$, then we can find $\epsilon>0$ such that the set $\{y\in\RR^d : \sclr{u}{y}\geq \epsilon\}$ has positive measure,
and the inequality
\begin{equation*}
L_{\mu}(x+tu)\geq \int_{\{y\in\RR^d:\sclr{u}{y}\geq\epsilon\}}e^{\sclr{x}{y}}e^{t\epsilon}\mu(\text{d}y)\geq ce^{t\epsilon},
\end{equation*}
proves the first assertion of Lemma \ref{laplace_at_infinity}. Suppose now on the contrary that $\mu(u^-)=1$. We then may write
\begin{equation*}
     L_{\mu}(x+tu)=\int_{u^{\perp}}e^{\sclr{x}{y}}\mu(\text{d}y)+\int_{\{y\in\RR^d:\sclr{u}{y}<0\}}e^{\sclr{x+tu}{y}}\mu(\text{d}y).
\end{equation*}
The second integral on the right-hand side of the above equation goes to zero as $t$ goes to infinity by the dominated convergence theorem, thus proving the second assertion of Lemma \ref{laplace_at_infinity}.
\end{proof}

\begin{lemma} 
\label{global_minimum_equivalence} Suppose that $\mu$ satisfies \ref{H1} and has all $C$-exponential moments. Then the Laplace transform $L_{\mu}$ has a global minimum on the closed convex cone $C$ if and only if there does not exist any $u\not=0$ in $C$ such that $\mu(u^-)=1$.
\end{lemma}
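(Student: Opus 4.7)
The plan is to handle the two implications separately, both of which follow directly from Lemma~\ref{laplace_at_infinity} together with the coercivity remark stated in the paragraph preceding it.

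For necessity, I will suppose that $L_\mu$ attains its global minimum on $C$ at some point $x^*$ and argue by contradiction, assuming there exists $u\neq 0$ in $C$ with $\mu(u^-)=1$. Since $C$ is a convex cone containing $x^*$ and $u$, the ray $x^*+tu$ stays in $C$ for every $t\geq 0$, so I can examine
\[
L_\mu(x^*+tu)-L_\mu(x^*)=\int_{\RR^d} e^{\sclr{x^*}{y}}\bigl(e^{t\sclr{u}{y}}-1\bigr)\,\mu(\text{d}y).
\]
Because $\mu(u^-)=1$, the factor $e^{t\sclr{u}{y}}-1$ is non-positive $\mu$-almost surely for every $t\geq 0$. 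At this stage I will invoke hypothesis~\ref{H1}: the support of $\mu$ is not contained in the hyperplane $u^\perp$, so the set $\{y:\sclr{u}{y}<0\}$ has positive $\mu$-measure, and for any $t>0$ the integrand is therefore strictly negative on a set of positive measure. This produces $L_\mu(x^*+tu)<L_\mu(x^*)$, contradicting the minimality of $x^*$.

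For sufficiency, I will assume there is no $u\neq 0$ in $C$ with $\mu(u^-)=1$ and reduce the problem to the coercivity condition \eqref{eq:coercive_condition}; once that condition is established, the paragraph preceding Lemma~\ref{laplace_at_infinity} immediately supplies a global minimum of $L_\mu$ on $C$. Fix any $u\in C\cap\SS^{d-1}$. Since $u\neq 0$, the hypothesis forces $\mu(u^-)<1$, so case~(1) of Lemma~\ref{laplace_at_infinity} applied at $x=0$ yields $\lim_{t\to\infty}L_\mu(tu)=\infty$, which is exactly \eqref{eq:coercive_condition}.

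Neither direction poses any real technical difficulty, as the substantive analytic content is already packaged inside Lemma~\ref{laplace_at_infinity}. The single point that requires genuine care is the invocation of \ref{H1} in the necessity proof: without it, the map $t\mapsto L_\mu(x^*+tu)$ is only weakly decreasing (the extreme case being $\mu$ supported on $u^\perp$, where it is constant), and one cannot extract the strict inequality needed to contradict minimality. Everything else is routine.
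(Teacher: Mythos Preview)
Your proof is correct. The sufficiency direction is identical to the paper's. For necessity, the paper argues slightly differently: it invokes case~(2) of Lemma~\ref{laplace_at_infinity} to obtain the finite limit $h(x)=\lim_{t\to\infty}L_\mu(x+tu)$, then uses strict convexity of $L_\mu$ (which follows from~\ref{H1}) to conclude $L_\mu(x)>h(x)\geq\inf_C L_\mu$ for every $x\in C$, so the infimum is never attained. Your argument is more direct: you fix the putative minimizer $x^*$, compute $L_\mu(x^*+tu)-L_\mu(x^*)$ explicitly as an integral, and read off the strict inequality from the sign of the integrand using~\ref{H1}. Your route is slightly more elementary in that it bypasses both case~(2) of Lemma~\ref{laplace_at_infinity} and the abstract strict-convexity statement; the paper's route, on the other hand, yields the marginally stronger conclusion that $L_\mu(x)>\inf_C L_\mu$ at \emph{every} $x\in C$, not only at a presumed minimizer. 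Both reach the lemma with the same amount of work.
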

\begin{proof}
If $\mu(u^-)<1$ for all $u\not=0$ in $C$, then $\lim_{t\to\infty}L_{\mu}(tu)=\infty$ by Lemma~\ref{laplace_at_infinity}, and the function $L_{\mu}$ has a global minimum on $C$ as explained earlier. Now, suppose on the contrary that $\mu(u^-)=1$ for some $u\not=0$ in $C$. Then, by Lemma~\ref{laplace_at_infinity} again, the limit
\begin{equation*}
     h(x):=\lim_{t\to \infty}L_{\mu}(x+tu)
\end{equation*} 
exists and is finite for all $x$. Since any convex cone is a semi-group, $x+tu\in C$ for all $x\in C$ and $t\geq 0$, and consequently 
\begin{equation*}
     h(x)\geq \inf_{C}L_{\mu},\quad \forall x\in C.
\end{equation*} 
But our assumption that $\mu$ satisfies \ref{H1} implies that $L_{\mu}$ is strictly convex, so that $L_{\mu}(x)> h(x)$ for all $x\in C$ (for else the strictly convex function $t\mapsto L_{\mu}(x+tu)$ on $[0,\infty)$ would not have a finite limit). We thus  reach the conclusion that $L_{\mu}(x)>\inf_{C}L_{\mu}$ for all $x\in C$, thereby proving that $L_{\mu}$ has no global minimum on $C$.
\end{proof}

For random walks with all exponential moments, the equivalence between conditions \ref{H2} and \ref{H2bis} under \ref{H1} is now an easy consequence of Lemma~\ref{global_minimum_equivalence}.

\section{Application to lattice path enumeration}
\label{sec:enumeration}

In this section we present an application of our main result (Theorem \ref{maintheorem}) in enumerative combinatorics: Given a finite set $\mathfrak{S}$ of allowed steps, a now classical problem is to study $\mathfrak{S}$-walks in the orthant 
\begin{equation*}
     Q:=(\RR^+)^d=\{x\in\RR^d : x_i\geq 0, \forall i\in \llbracket 1,d\rrbracket\},
\end{equation*}     
that is walks confined to $Q$, starting at a fixed point $x$ (often the origin) and using steps in $\mathfrak{S}$ only. Denote by $f_{\mathfrak{S}}(x,y;n)$ the number of such walks that end at $y$ and use exactly $n$ steps. Many properties of the counting numbers $f_{\mathfrak{S}}(x,y;n)$ have been recently analyzed (the seminal work in this area is \cite{BoMi10}). First, exact properties of them were derived, via the study of their generating function (exact expression and algebraic nature). Such properties are now well established for the case of small steps walks in the quarter-plane, meaning that the step set $\mathfrak{S}$ is included in $\{0,\pm 1\}^2$. More qualitative properties of the $f_{\mathfrak{S}}(x,y;n)$ were also investigated, such as the asymptotic behavior, as $n\to\infty$, of the number of excursions $f_{\mathfrak{S}}(x,y;n)$ for fixed $y$, or that of the total number of walks, 
\begin{equation}
\label{eq:total_walks}
     f_{\mathfrak{S}}(x;n):=\sum_{y\in Q}f_{\mathfrak{S}}(x,y;n).
\end{equation}
Concerning the excursions, several small steps cases have been treated by Bousquet-M\'elou and Mishna \cite{BoMi10} and by Fayolle and Raschel \cite{FaRa12}. Later on, Denisov and Wachtel \cite{DeWa11} obtained the very precise asymptotics of the excursions, for a quite large class of step sets and cones. As for the total number of walks \eqref{eq:total_walks}, only very particular cases are solved, see again \cite{BoMi10,FaRa12}. In a most recent work \cite{JoMiYe13}, Johnson, Mishna and Yeats obtained an upper bound for the exponential growth constant, namely,
\begin{equation*}
     \limsup_{n\to\infty} f_{\mathfrak{S}}(x;n)^{1/n},
\end{equation*}
and proved by comparison with results of \cite{FaRa12} that these bounds are tight for all small steps models in the quarter-plane. In the present article, we find the exponential growth constant of the total number of walks \eqref{eq:total_walks} in any dimension for any model such that:
\begin{enumerate}[label=(H\arabic{*}''),ref={\rm (H\arabic{*}'')}]
\item\label{H1p}The step set $\mathfrak S$ is not included in a linear hyperplane;
\item\label{H2p}The step set $\mathfrak S$ is not included in a half-space $u^-$, with $u\in Q\setminus \{0\}$. 
\end{enumerate}
Our results provide the first unified treatment of this problem of determining the growth constant for the number of lattice paths confined to the positive orthant. In the sequel we shall say that a step set $\mathfrak S$ is proper if it satisfies to \ref{H1p} and \ref{H2p}. Note in particular that the well-known $79$ models of walks in the quarter-plane studied in \cite{BoMi10,FaRa12} (including the so-called $5$ singular walks) satisfy both hypotheses above.

\begin{corollary}
\label{cor:enumeration}
Let $\mathfrak S$ be any proper step set. The Laplace transform of $\mathfrak S$,
\begin{equation*}
     L_\mathfrak S(x) := \sum_{s\in \mathfrak S} e^{\sclr{x}{s}},
\end{equation*}
reaches a global minimum on $Q$ at a unique point $x_0$, and there exists $\delta\geq0$ such that for any starting point $x\in Q_\delta$,
\begin{equation*}
     \lim_{n\to\infty} f_{\mathfrak{S}}(x;n)^{1/n} = L_\mathfrak S(x_0).
\end{equation*}
\end{corollary}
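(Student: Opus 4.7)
The plan is to apply Theorem~\ref{maintheorem} to the uniform probability measure $\mu := |\mathfrak S|^{-1}\sum_{s\in\mathfrak S}\delta_s$ on the step set. The link with enumeration is the trivial identity
\begin{equation*}
     f_{\mathfrak S}(x;n)=|\mathfrak S|^n\,\PP^x_{\mu}[\tau_Q>n],
\end{equation*}
together with the relation $L_{\mu}=L_{\mathfrak S}/|\mathfrak S|$ between the two Laplace transforms. Thus once the two hypotheses of the main theorem are verified for this $\mu$ and the cone $K=Q$, the asymptotic $\PP^x_{\mu}[\tau_Q>n]^{1/n}\to L_{\mu}(x_0)$ will, after multiplying by $|\mathfrak S|$ inside the $n$-th root, deliver exactly $f_{\mathfrak S}(x;n)^{1/n}\to L_{\mathfrak S}(x_0)$.

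The key observation is that the orthant is self-dual: $Q^{*}=Q$. Hence \ref{H2} for $\mu$ becomes the existence of a (local, hence by convexity global) minimum of $L_{\mu}$ on $Q$ itself. Since $\mathfrak S$ is finite, $\mu$ has all exponential moments, so Lemma~\ref{global_minimum_equivalence} applies: $L_{\mu}$ has a global minimum on $Q$ if and only if there is no $u\in Q\setminus\{0\}$ with $\mu(u^-)=1$. But $\mu(u^-)=1$ is equivalent to $\mathfrak S\subset u^-$, which is forbidden precisely by \ref{H2p}. Hypothesis \ref{H1} is a verbatim restatement of \ref{H1p}. So both assumptions of Theorem~\ref{maintheorem} hold, yielding a minimum point $x_0\in Q=Q^{*}$ and some $\delta\geq 0$ such that
\begin{equation*}
     \lim_{n\to\infty}\PP^x_{\mu}[\tau_Q>n]^{1/n}=L_{\mu}(x_0),\quad \forall x\in Q_{\delta}.
\end{equation*}

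Uniqueness of $x_0$ follows from the strict convexity of $L_{\mu}$ on $Q$ established at the beginning of Subsection~\ref{subsec:geometric_interpretation} under~\ref{H1}: if two distinct minima existed, equality would have to hold in~\eqref{eq:eq_not_eq}, forcing $\mu$ to be supported on a hyperplane and contradicting~\ref{H1p}. Putting the pieces together and translating back through $L_{\mathfrak S}=|\mathfrak S|\cdot L_{\mu}$ gives the stated limit.

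Honestly, there is no real obstacle here: the corollary is essentially a dictionary translation of Theorem~\ref{maintheorem} for finitely supported lattice measures. The only step that might look delicate is the equivalence between the combinatorial hypothesis \ref{H2p} and the analytic hypothesis \ref{H2}, but that is exactly the content of Lemma~\ref{global_minimum_equivalence} combined with the self-duality $Q^{*}=Q$; one does not even need the more general equivalence \ref{H2}~$\Leftrightarrow$~\ref{H2bis} since the finite support of $\mu$ gives all exponential moments for free.
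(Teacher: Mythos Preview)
Your proof is correct and follows the same route as the paper: both reduce to Theorem~\ref{maintheorem} via the uniform measure $\mu$ on $\mathfrak S$, the identity $f_{\mathfrak S}(x;n)=|\mathfrak S|^n\PP^x_\mu[\tau_Q>n]$, and the self-duality $Q^*=Q$. You are in fact slightly more thorough than the paper's own argument, since you spell out why \ref{H2} holds (via Lemma~\ref{global_minimum_equivalence}) and why the minimizer $x_0$ is unique (strict convexity under \ref{H1}), points the paper leaves implicit.
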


Suppose that in addition:
\begin{enumerate}[label=(H\arabic{*}''),ref={\rm (H\arabic{*}'')}]
\setcounter{enumi}{2}
\item\label{H3p}The step set allows a path staying in $Q$ from the origin to some point in the interior of $Q$.
\end{enumerate}
Then it follows from Proposition \ref{securitycone} that the result in Corollary~\ref{cor:enumeration} holds with $\delta=0$, i.e., it is valid for all $x\in Q$. Note that this assumption is not restrictive from a combinatorial point of view, since if \ref{H3p} is not satisfied, the counting problem is obvious.

\begin{proof}[Proof of Corollary \ref{cor:enumeration}]
Consider a random walk $(S_n)_{n\geq0}$ starting from $x$ such that $\mu$ is the uniform law on $\mathfrak{S}$. Let then $\tau_Q$ denote the first exit time from $Q$. The enumeration problem is related to probabilities in a simple way:
\begin{equation}
\label{eq=counting-probab}
     \mathbb P_{\mu}^x[\tau_Q> n]=\frac{f_\mathfrak S(x;n)}{\vert\mathfrak S \vert^n}.
\end{equation}
Further, it is immediate from our definitions that $L_\mathfrak S(x)=\vert \mathfrak S\vert L_\mu(x)$. Corollary \ref{cor:enumeration} then follows from Theorem \ref{maintheorem} and from the fact that $Q^*=Q$.
\end{proof}

As a consequence, we obtain the following result, which was conjectured in \cite{JoMiYe13}:

\begin{corollary}
\label{cor:enumeration2}
Let $\mathfrak S\subset \mathbb Z^d$ be a proper step set {\rm(}hypotheses \ref{H1p} and \ref{H2p}{\rm)}, which additionally satisfies \ref{H3p}, and let $K_\mathfrak S$ be the growth constant for the total number of walks \eqref{eq:total_walks}. Let $\mathcal P$ be the set of hyperplanes through the origin in $\mathbb R^d$ which do not meet the interior of the first orthant. Given $p\in\mathcal P$, let $K_\mathfrak S( p )$ be the growth constant of the walks on $\mathfrak S$ which are restricted to the side of $p$ which includes the first orthant. Then $K_\mathfrak S=\min_{p\in\mathcal P}K_\mathfrak S( p )$.
\end{corollary}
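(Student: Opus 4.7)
The plan is to re-express both $K_\mathfrak S$ and each $K_\mathfrak S(p)$ as minima of the Laplace transform $L_\mathfrak S$ on suitable subsets of $\RR^d$, via Theorem~\ref{maintheorem} and Corollary~\ref{cor:enumeration}, and then to check by an elementary geometric remark that the two minima coincide. The opening move is to apply Corollary~\ref{cor:enumeration} under \ref{H3p}, together with Proposition~\ref{securitycone}, to obtain $K_\mathfrak S = \min_{x \in Q} L_\mathfrak S(x)$, using the self-duality $Q^* = Q$ of the orthant.

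Next I would parametrize $\mathcal P$. A hyperplane $u^\perp$ through the origin avoids $\open{Q}$ if and only if (up to sign and positive scaling) $u$ lies in $Q^* \setminus \{0\} = Q \setminus \{0\}$; the side containing $Q$ is then the closed half-space $H_u := \{x \in \RR^d : \sclr{u}{x} \geq 0\}$, whose dual cone is the ray $\RR^+ u$. Applying Theorem~\ref{maintheorem} and Proposition~\ref{securitycone} with $K = H_u$ to the uniform random walk on $\mathfrak S$ started at the origin, combined with the translation~\eqref{eq=counting-probab}, should yield $K_\mathfrak S(p) = \min_{t \geq 0} L_\mathfrak S(tu)$. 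The conclusion would then follow from
\begin{equation*}
\min_{p \in \mathcal P} K_\mathfrak S(p) \; = \; \min_{u \in Q \setminus \{0\}}\, \min_{t \geq 0} L_\mathfrak S(tu) \; = \; \min_{x \in Q} L_\mathfrak S(x) \; = \; K_\mathfrak S,
\end{equation*}
where the middle equality uses that every $x \in Q$ can be written as $tu$ with $t \geq 0$ and $u \in Q \setminus \{0\}$.

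The main obstacle I anticipate is verifying that the hypotheses of Theorem~\ref{maintheorem} and Proposition~\ref{securitycone} hold uniformly in $u \in Q \setminus \{0\}$ for the half-space $K = H_u$. Condition~\ref{H1} is inherited directly from \ref{H1p} because the step set does not change. Condition~\ref{H2} for $H_u$ amounts to the existence of a minimum of $L_\mu$ on the ray $\RR^+ u$, which follows from \ref{H2p}: some $s \in \mathfrak S$ satisfies $\sclr{u}{s} > 0$, forcing $L_\mu(tu) \to \infty$ as $t \to \infty$, so the continuous function $t \mapsto L_\mu(tu)$ attains its infimum on $[0,\infty)$ at some $t_u^* \geq 0$. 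Finally, the hypothesis of Proposition~\ref{securitycone} should hold for $H_u$ with $\delta = 0$: the path from $0$ to $\open{Q}$ provided by \ref{H3p} stays in $Q \subset H_u$ and lands in $\open{Q} \subset \open{H_u}$, the latter inclusion following from $\sclr{u}{y} > 0$ whenever $y \in \open{Q}$ and $u \in Q \setminus \{0\}$.
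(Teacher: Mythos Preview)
Your proposal is correct and follows essentially the same route as the paper: parametrize $\mathcal P$ by directions $u$ in the self-dual cone $Q$, identify the dual of the half-space $H_u$ as the ray $\RR^+u$, apply Theorem~\ref{maintheorem} to get $K_\mathfrak S(p)=\min_{t\geq 0}L_\mathfrak S(tu)$, and conclude via $\min_{x\in Q}L_\mathfrak S(x)=\min_u\min_{t\geq 0}L_\mathfrak S(tu)$. You are in fact more careful than the paper in checking that \ref{H2} and the hypothesis of Proposition~\ref{securitycone} hold for each half-space $H_u$; the paper leaves these verifications implicit.
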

\begin{proof}
Let us first notice that $\mathcal P$ can be described as the set of hyperplanes $u^{\perp}$ such that $u\in Q\cap\SS^{d-1}$, and that the side of $p=u^{\perp}$ which includes the first orthant is then the half-space $u^+=\{x\in\RR^d : \sclr{x}{u}\geq 0\}$. By Theorem~\ref{maintheorem}, the exponential rate for the random walk associated to the step set $\mathfrak S$ and confined to $u^+$ is the minimum of $L_\mathfrak S/\vert\mathfrak S \vert$ on the dual cone $(u^+)^*=\{tu :t\geq 0\}$. Therefore, the growth constant $K_\mathfrak S( p )$ equals $\min_{t\geq 0}L_\mathfrak S(tu)$, and the equality
\begin{equation*}
\min_{x\in Q}L_\mathfrak S(x)=\min_{u\in Q\cap\SS^{d-1}}\min_{t\geq 0}L_\mathfrak S(tu)
\end{equation*}
immediately translates into
\begin{equation*}
K_\mathfrak S=\min_{p\in\mathcal P}K_\mathfrak S( p ).
\end{equation*}
The proof of Corollary \ref{cor:enumeration2} is completed.
\end{proof}

\section{An example of half-space walks}
\label{sec:walks_support_half-space}

In this section we illustrate the following phenomenon: If the support of the random walk is included in a certain half-space (chosen so as contradicting \ref{H2}), such a universal result as Theorem \ref{maintheorem} does not hold; in particular, the exponential decay of the non-exit probability may depend on the starting point.

Let $(S_n)_{n\geq 0}$ be the random walk on $Q$ starting at $x$ and with transition probabilities to $(1,-1)$, $(-1,1)$ and $(-1,-1)$, with respective probabilities $q$, $q$ and $p$,
where $p+2q=1$ and $p,q>0$, see Figures \ref{fig:degenerated_walks} and \ref{fig:degenerated_walks_detailed}. Let $\tau_Q$ be the exit time \eqref{eq:def_tau_K} of this random walk from the quarter-plane $Q$. Finally, define  for fixed $N$ the segment $D_{2N}=\{(i,j)\in Q : i+j=2N\}$.

\begin{proposition}
\label{prop:half-space}
For any $N\geq 1$ and any $x \in D_{2N}$, we have
\begin{equation}
\label{eq:prop:half-space}
     \lim_{n\to\infty}\PP^x[\tau_Q>n]^{1/n} = 2q\cos\left(\frac{\pi}{2N+2}\right).
\end{equation}
\end{proposition}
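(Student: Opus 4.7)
The key observation is that each admissible step changes $i+j$ by $0$ (for $(1,-1)$ or $(-1,1)$) or $-2$ (for $(-1,-1)$), so the walk started at $x\in D_{2N}$ never leaves the \emph{finite} set $S_N:=\{(i,j)\in Q:i+j\leq 2N,\ i+j\equiv 0\bmod 2\}$. Writing $P$ for the sub-stochastic matrix of the walk on $S_N$, killed upon exiting $Q$, we have $\PP^x[\tau_Q>n]=(P^n\mathbf 1)(x)$, and the proposition reduces to showing that the spectral radius of $P$ equals $\rho:=2q\cos(\pi/(2N+2))$.

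To locate this spectral radius I would decompose $P$ according to the anti-diagonals $D_{2N},D_{2N-2},\ldots,D_0$. Because every step either preserves or decreases $i+j$, $P$ is block upper triangular in this ordering, and its diagonal block on $D_{2k}$ is the $(2k+1)\times(2k+1)$ symmetric tridiagonal matrix with $0$ on the diagonal and $q$ immediately above and below. Its eigenvalues are the classical numbers $2q\cos(j\pi/(2k+2))$ for $j=1,\ldots,2k+1$, with positive top eigenvector $i\mapsto\sin((i+1)\pi/(2k+2))$. The spectral radius of $P$ equals the maximum of those of the diagonal blocks, and since $k\mapsto\cos(\pi/(2k+2))$ is strictly increasing this maximum is $\rho$, attained uniquely on the top block $D_{2N}$.

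The upper bound then follows from Gelfand's formula $\|P^n\|^{1/n}\to\rho$, which yields $\PP^x[\tau_Q>n]\leq\|P^n\mathbf 1\|_\infty\leq\|P^n\|$ and hence $\limsup_n \PP^x[\tau_Q>n]^{1/n}\leq\rho$. For the matching lower bound I would extend the top eigenvector of the block $D_{2N}$ by zero: set $h(k,2N-k):=\sin((k+1)\pi/(2N+2))$ on $D_{2N}$ and $h\equiv 0$ on $D_{2m}$ for $m<N$. Using the identity $\sin((k-1)\theta)+\sin((k+1)\theta)=2\sin(k\theta)\cos\theta$ with $\theta=\pi/(2N+2)$, the relation $Ph=\rho h$ is immediate at every interior site of $D_{2N}$; at the two corners $(0,2N)$ and $(2N,0)$, where two of the three possible steps leave $Q$, the identity reduces to the double-angle formula $q\sin 2\theta=\rho\sin\theta$; and on lower anti-diagonals both sides vanish trivially. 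Iterating, $\EE^x[h(S_n)\mathbf 1_{\tau_Q>n}]=\rho^n h(x)$, and since $0\leq h\leq 1$ with $h(x)>0$ for $x\in D_{2N}$, one gets $\PP^x[\tau_Q>n]\geq h(x)\rho^n$, which closes the argument.

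There is no real obstacle once the parity/anti-diagonal reduction to a finite sub-stochastic matrix has been made; the whole proof is linear algebra. The only verification that requires a moment of care is the corner case of $Ph=\rho h$, and it works precisely because $h$ has been extended by zero outside $D_{2N}$, so that the contributions of the steps exiting $Q$ silently disappear from both sides of the identity.
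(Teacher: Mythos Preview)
Your argument is correct and genuinely different from the paper's. The paper proceeds by induction on $N$: for the base case it conditions on the event that no $(-1,-1)$ step has occurred, which reduces the walk on $D_2$ to a simple symmetric random walk on $\{0,1,2\}$ and invokes the classical rate $\cos(\pi/4)$; the inductive step introduces the hitting time $H$ of $D_{2N}$ from $D_{2N+2}$, decomposes $\PP^x[\tau_Q>n]$ according to $\{H>n\}$ versus $\{H=k\}$, and controls the resulting convolution by a Cauchy-product estimate together with the induction hypothesis. Your route bypasses the induction entirely by exploiting the block upper-triangular structure of the killed transition matrix with respect to the anti-diagonal filtration, reading off the spectral radius from the diagonal blocks and then exhibiting an explicit nonnegative eigenfunction supported on $D_{2N}$. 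What your approach buys is a cleaner, non-inductive argument that in addition yields the quantitative lower bound $\PP^x[\tau_Q>n]\geq h(x)\rho^n$ rather than merely the $n$th-root limit; the paper's approach, on the other hand, is closer in spirit to the probabilistic decompositions used elsewhere in the article and does not require identifying the eigenfunction explicitly.
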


We shall need the following result on the simple symmetric random walk on $\ZZ$ (the proof can be easily derived from the identities in \cite[page 243]{Sp64}):
\begin{lemma}[\cite{Sp64}]
\label{lem:Sp}
For the simple symmetric random walk $(\widetilde S_n)_{n\geq 0}$ on $\mathbb Z$ {\rm(}with jumps to the left and to the right with equal probabilities $1/2${\rm)}, we have, for any $x\in\llbracket 0,2N\rrbracket$,
\begin{equation*}
     \lim_{n\to\infty}\PP^x[\widetilde S_1,\ldots,\widetilde S_n\in\llbracket 0,2N\rrbracket]^{1/n} =\cos\left(\frac{\pi}{2N+2}\right).
\end{equation*}
\end{lemma}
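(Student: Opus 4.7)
The plan is to exploit the fact that every step leaves $X+Y$ unchanged (for $(1,-1)$ or $(-1,1)$) or decreases it by $2$ (for $(-1,-1)$). Writing $C_n$ for the number of $(-1,-1)$ steps taken by time $n$, the walk $S_n$ lies on the antidiagonal $D_{2N-2C_n}$ for as long as it survives, and on each such diagonal the $X$-coordinate moves by $\pm 1$ with equal weight $q$. The survival condition $S_j\in Q$ reads exactly $X_j\in\{0,1,\ldots,2(N-C_j)\}$.

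For the lower bound, I restrict to the event $\{C_n=0\}$ that no lazy step occurs in the first $n$ steps. Its probability is $(2q)^n$, and conditionally on it each step is $(1,-1)$ or $(-1,1)$ with probability $\frac{1}{2}$, so the $X$-coordinate performs the simple symmetric random walk $(\widetilde{S}_j)$ of Lemma~\ref{lem:Sp}. Writing $x=(i,2N-i)$, I obtain
\begin{equation*}
  \PP^x[\tau_Q>n] \;\geq\; \PP^x[\tau_Q>n,\, C_n=0] \;=\; (2q)^n\, \PP^{i}\bigl[\widetilde{S}_1,\ldots,\widetilde{S}_n\in\{0,\ldots,2N\}\bigr],
\end{equation*}
and Lemma~\ref{lem:Sp} yields $\liminf_n \PP^x[\tau_Q>n]^{1/n}\geq 2q\cos(\pi/(2N+2))$.

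For the upper bound, I exploit the block lower-triangular structure of the substochastic transition matrix $P$ of the walk killed at $\tau_Q$, grouping the finitely many reachable states $V_N=\bigcup_{M=0}^{N}(D_{2M}\cap Q)$ by the diagonal to which they belong. The diagonal block $A_M$, expressed in $X$-coordinates on $D_{2M}\cap Q$, is the tridiagonal matrix on $\{0,\ldots,2M\}$ with off-diagonal entries $q$; the subdiagonal block encodes the lazy transition $D_{2M}\to D_{2M-2}$. The spectrum of $P$ is then the union of the spectra of the $A_M$, and since $A_M$ is $2q$ times the SSRW transition matrix on $\{0,\ldots,2M\}$ its maximal eigenvalue is $2q\cos(\pi/(2M+2))$. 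As $M\mapsto\cos(\pi/(2M+2))$ is increasing, the spectral radius of $P$ equals $\rho:=2q\cos(\pi/(2N+2))$, and a standard bound gives $\PP^x[\tau_Q>n]=\sum_y P^n(x,y)\leq C(x)\,n^{r}\,\rho^n$, so $\limsup_n\PP^x[\tau_Q>n]^{1/n}\leq\rho$.

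The main obstacle is making the upper bound fully rigorous, since $P$ is not symmetric and the subdiagonal blocks couple the diagonals. One either invokes a Jordan form argument for block-triangular matrices, or, more elementarily, writes $\PP^x[\tau_Q>n]=\sum_{k=0}^N \PP^x[\tau_Q>n,\,C_n=k]$ and bounds each summand by $\binom{n}{k}p^k$ times a product of SSRW survival probabilities on the intervals $\{0,\ldots,2(N-j)\}$ for $j=0,\ldots,k$, each controlled by Lemma~\ref{lem:Sp}; since $\cos(\pi/(2M+2))\leq\cos(\pi/(2N+2))$ for $M\leq N$, this gives the desired polynomial times $\rho^n$ estimate without any nontrivial spectral theory and is arguably the cleanest route to the upper bound.
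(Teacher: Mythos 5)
Your proposal does not prove the statement it was asked to prove. Lemma~\ref{lem:Sp} is a purely one-dimensional fact about the simple symmetric random walk $(\widetilde S_n)$ on $\ZZ$ confined to the interval $\llbracket 0,2N\rrbracket$. What you have written is an argument for Proposition~\ref{prop:half-space}, the two-dimensional quarter-plane result with steps $(1,-1)$, $(-1,1)$, $(-1,-1)$ --- and both your lower bound and your ``more elementary'' upper bound explicitly \emph{invoke} Lemma~\ref{lem:Sp} as an ingredient (``Lemma~\ref{lem:Sp} yields\dots'', ``each controlled by Lemma~\ref{lem:Sp}''). As a proof of the lemma itself this is circular: you assume exactly the asymptotics $\cos(\pi/(2N+2))$ that you are supposed to establish. (For what it is worth, the paper does not prove this lemma either; it cites Spitzer, and its own proof of Proposition~\ref{prop:half-space} is close in spirit to your lower-bound/conditioning argument.)

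A genuine proof of Lemma~\ref{lem:Sp} must produce the constant $\cos(\pi/(2N+2))$ from scratch. The standard route: let $P$ be the $(2N+1)\times(2N+1)$ substochastic matrix $P(i,j)=\tfrac12\mathbbm{1}_{\{|i-j|=1\}}$ on $\llbracket 0,2N\rrbracket$, so that $\PP^x[\widetilde S_1,\ldots,\widetilde S_n\in\llbracket 0,2N\rrbracket]=\sum_{j}P^n(x,j)$. This tridiagonal matrix is symmetric with eigenvalues $\lambda_k=\cos\bigl(k\pi/(2N+2)\bigr)$, $k=1,\ldots,2N+1$, and eigenvectors $v_k(j)=\sin\bigl(k\pi(j+1)/(2N+2)\bigr)$ (a direct check, or the discrete Dirichlet Laplacian on a path). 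The top eigenvector $v_1$ is strictly positive, so the spectral decomposition gives $P^{2n}(x,x)\geq v_1(x)^2\lambda_1^{2n}$ (up to normalization), while $\sum_j P^n(x,j)\leq (2N+1)\,\lambda_1^{\,n}$ since the operator norm of the symmetric matrix $P$ is $\lambda_1$; monotonicity of the survival probability in $n$ handles the parity issue, and the $n$-th root limit $\lambda_1=\cos(\pi/(2N+2))$ follows. Your tridiagonal-matrix observation in the upper-bound paragraph is in fact the right idea --- but it needs to be carried out for the $1$-dimensional matrix itself, with the eigenvalues computed explicitly, rather than deferred back to the lemma.
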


  \unitlength=0.6cm
\begin{figure}[t]
  \begin{center}
\begin{tabular}{ccccc}
    \hspace{-0.9cm}
        \begin{picture}(4,7.5)
    \thicklines
    \put(1,1){{\vector(1,0){7}}}
    \put(1,1){\vector(0,1){7}}
    \put(3,1){\line(-1,1){2}}
    \put(5,1){\line(-1,1){4}}
    \put(7,1){\line(-1,1){6}}
    \put(6.84,0.84){$\bullet$}
    \put(5.84,1.84){$\bullet$}
    \put(4.84,2.84){$\bullet$}
    \put(3.84,3.84){$\bullet$}
    \put(2.84,4.84){$\bullet$}
    \put(1.84,5.84){$\bullet$}
    \put(0.84,6.84){$\bullet$}
    \put(4.84,0.84){$\bullet$}
    \put(3.84,1.84){$\bullet$}
    \put(2.84,2.84){$\bullet$}
    \put(1.84,3.84){$\bullet$}
    \put(0.84,4.84){$\bullet$}
    \put(1.84,1.84){$\bullet$}
    \put(2.84,0.84){$\bullet$}
    \put(0.84,2.84){$\bullet$}
    \put(0.84,0.84){$\bullet$}
    \thinlines
    \put(4.2,4.2){\textcolor{blue}{\vector(1,-1){1}}}
    \put(4.2,4.2){\textcolor{blue}{\vector(-1,1){1}}}
    \put(4,4){\textcolor{blue}{\vector(-1,-1){0.92}}}
    \linethickness{0.1mm}
    \put(1,2){\dottedline{0.1}(0,0)(5,0)}
    \put(1,3){\dottedline{0.1}(0,0)(4,0)}
    \put(1,4){\dottedline{0.1}(0,0)(3,0)}
    \put(1,5){\dottedline{0.1}(0,0)(2,0)}
    \put(1,6){\dottedline{0.1}(0,0)(1,0)}
    \put(1,7){\dottedline{0.1}(0,0)(0,0)}
    \put(2,1){\dottedline{0.1}(0,0)(0,5)}
    \put(3,1){\dottedline{0.1}(0,0)(0,4)}
    \put(4,1){\dottedline{0.1}(0,0)(0,3)}
    \put(5,1){\dottedline{0.1}(0,0)(0,2)}
    \put(6,1){\dottedline{0.1}(0,0)(0,1)}
    \put(7,1){\dottedline{0.1}(0,0)(0,0)}
    \put(4.6,0.3){$2N$}
    \put(6.6,0.3){$2N+2$}
    \put(-0.1,4.8){$2N$}
    \put(-1.25,6.8){$2N+2$}
    \put(0.4,0.3){$0$}
    \put(4.9,3.9){\textcolor{blue}{$q$}}
    \put(3.9,4.9){\textcolor{blue}{$q$}}
    \put(4.08,4.08){\tiny \textcolor{blue}{$\bullet$}}
    \put(3.1,3.6){\textcolor{blue}{$p$}}
    \end{picture}
    \end{tabular}
  \end{center}
  \vspace{-4mm}
\caption{Random walks considered in the proof of Proposition \ref{prop:half-space} on the lines $\{(i,j)\in Q : i+j=2N\}$ for $N\geq 0$}
\label{fig:degenerated_walks_detailed}
\end{figure}
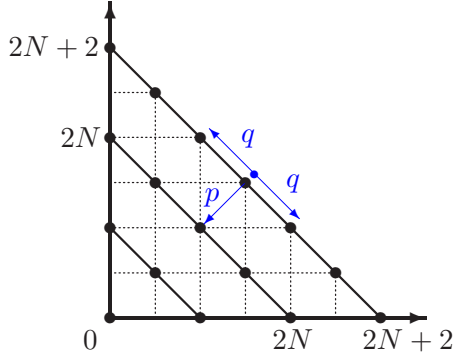

\begin{proof}[Proof of Proposition \ref{prop:half-space}]
We shall prove Proposition \ref{prop:half-space} by induction over $N\geq 1$. For $N=1$, we have three choices for $x$ (see Figure \ref{fig:degenerated_walks_detailed}). We write the proof when $x=(1,1)$, since the arguments for other values of $x$ are quite similar. For this choice of $x$, the origin $(0,0)$ can be reached only at odd times $n$, and in that event, the random walk gets out of $Q$ at time $n+1$. From this simple remark we deduce that (below, we note $(X_k)_{k\geq 1}$ the increments of the random walk $(S_n)_{n\geq 0}$)
\begin{align*}
     \PP^x[\tau_Q>2n] &= \PP^x[\tau_Q>2n, X_k\neq (-1,-1),\forall k\in\llbracket 1,n\rrbracket] \\    
     &= \PP^x[\tau_Q>2n\vert \,X_k\neq (-1,-1),\forall k\in\llbracket 1,n\rrbracket] (2q)^n.
\end{align*}
Further, the random walk conditioned on never making the jump $(-1,-1)$ is a simple symmetric random walk on the segment $D_{2}$. Therefore, 
\begin{equation*}
     \PP^x[\tau_Q>2n] = \PP^1[\widetilde S_1,\ldots ,\widetilde S_n\in\llbracket 0,2\rrbracket](2q)^{2n}.
\end{equation*}
Using Lemma \ref{lem:Sp}, we conclude that for $x=(1,1)$,
\begin{equation*}
     \lim_{n\to\infty}\PP^x[\tau_Q>2n]^{1/(2n)} = 2q\cos\left(\frac{\pi}{4}\right).
\end{equation*}
The fact that $\PP^x[\tau>n]$ is decreasing in $n$ implies that the above equation holds with $2n+1$ instead of $2n$. This achieves the proof of  Proposition \ref{prop:half-space} for $N=1$.

Let us now assume that equation \eqref{eq:prop:half-space} holds for a fixed value of $N\geq 1$. For $x\in D_{2N+2}$, introduce 
\begin{equation*}
     H :=\inf\{n>0: S_n\in D_{2N}\}
\end{equation*}
the hitting time of the set $D_{2N}$, see Figure \ref{fig:degenerated_walks_detailed}. We can write
\begin{equation}
\label{eq:decomposition}
     \mathbb P^x[\tau_Q>n] =\mathbb P^x[\tau_Q>n,H>n]+\sum_{k=1}^{n}\mathbb P^x[\tau_Q>n,H=k].
\end{equation}
The first term in the right-hand side of \eqref{eq:decomposition} can be written as
\begin{equation*}
     \mathbb P^x[\tau_Q>n,H>n] = \mathbb P^x[\tau_Q>n\vert H>n](2q)^n,
\end{equation*}
where (for the same reasons as for the case $N=1$)
\begin{equation*}
     \lim_{n\to\infty}\mathbb P^x[\tau_Q>n\vert H>n]^{1/n} =\cos\left(\frac{\pi}{2N+4}\right).
\end{equation*}
As for the second term in the right-hand side of \eqref{eq:decomposition}, 
\begin{align*}
     \mathbb P^x[\tau_Q>n,H=k] &= \EE^x[\tau_Q>k,H=k,\PP^{S_k}[\tau_Q>n-k]]\\
     &\leq C\cdot \mathbb P^x[\tau_Q>k,H=k] \mathbb P^{x_0}[\tau_Q>n-k]\\
     &\leq C\cdot {\mathbb P^x[\tau_Q>k-1,H>k-1]} \mathbb P^{x_0}[\tau_Q>n-k]\\
     &:= C a_k b_{n-k}.
\end{align*}
The first equality above comes from the strong Markov property. The first inequality follows from the fact that for any fixed $x_0\in D_{2N}$, there exists a constant $C>0$ such that, for any $n\geq 0$ and any $y\in D_{2N}$, $\PP^y[\tau_Q>n]\leq C\PP^{x_0}[\tau_Q>n]$. The second inequality is obvious, and the last line has to be read as a definition.

Using on the one hand the same reasoning as for the case $N=1$, and on the other hand the induction hypothesis, we obtain
\begin{equation*}
     \lim_{n\to\infty}a_n^{1/n} = 2q\cos\left(\frac{\pi}{2N+4}\right)>
      2q\cos\left(\frac{\pi}{2N+2}\right)=\lim_{n\to\infty}b_n^{1/n}.
\end{equation*}
Standard properties of the Cauchy product then lead to
\begin{equation*}
     \limsup_{n\to\infty}\left(\sum_{k=1}^{n}a_k b_{n-k}\right)^{1/n}\leq 2q\cos\left(\frac{\pi}{2N+4}\right).
\end{equation*}     
To summarize, with the help of \eqref{eq:decomposition} we have written $\PP^x[\tau_Q>n] = A_n+B_n$, where 
\begin{equation*}
     \lim_{n\to\infty}A_n^{1/n}=2q\cos\left(\frac{\pi}{2N+4}\right),\quad \limsup_{n\to\infty}B_n^{1/n}\leq 2q\cos\left(\frac{\pi}{2N+4}\right).
\end{equation*}     
The formula \eqref{eq:prop:half-space} therefore holds for $N+1$, and Proposition \ref{prop:half-space} is proved.
\end{proof}

\section{The case of random walks with drift in the cone}
\label{sec:appendix}

In this section we refine a result announced in~\cite{Gar07} concerning the non-exponential decay of the non-exit probability \eqref{eq:def_proba} from a cone for a centered, square-integrable and non-degenerate multidimensional random walk. We prove that the result in~\cite{Gar07} still holds if the random walk has a drift in the cone and if the hypothesis that its variance-covariance matrix is non-degenerate is weakened to \ref{H1}. 
This result is one of the main ingredients of the proof of Theorem \ref{maintheorem}. We would like to notice that the proof of (a weakened form of) Proposition~\ref{theorem_cras_extended} below was only sketched in \cite{Gar07}, so that the extended proof we shall give here is new.

As before, we only assume that $K$ is a closed convex cone with non-empty interior $\open{K}$. We fix some $v\in \open{K}$ and define $K_{\delta}=K+\delta v$. In this setting, we shall prove the following:

\begin{proposition}
\label{theorem_cras_extended} 
Assume that the distribution $\mu$ of the random walk increments is square-integrable and truly $d$-dimensional \ref{H1}. Suppose in addition that
the drift $m=\EE^0[S_1]$ belongs to the cone $K$ and that $v$ is a vector orthogonal to $m$. Then there exists $\alpha>0$ and $\delta\geq 0$ such that, for all $x\in K_{\delta}$,
\begin{equation*}
\lim_{n\to\infty}\PP^x[\tau_K>n, \vert\sclr{v}{S_n}\vert\leq \alpha\sqrt{n}]^{1/n}=1.
\end{equation*}
\end{proposition}

If $m=0$, as we have already explained, \ref{H1} is equivalent to the fact that the variance-covariance matrix of the increments distribution is non-degenerate. Hence, we exactly recover~\cite[Theorem]{Gar07}. However, if $m\neq 0$, Proposition~\ref{theorem_cras_extended} can not be derived from \cite[Theorem]{Gar07}. Indeed, under the hypothesis of Theorem~\ref{theorem_cras_extended}, it is clear that
\begin{equation*}
\PP^x[\tau_K>n, \vert\sclr{v}{S_n}\vert\leq \alpha\sqrt{n}]\geq \PP^x[\tau_K(\widetilde{S})>n, \vert\sclr{v}{\widetilde{S}_n}\vert\leq \alpha\sqrt{n}],
\end{equation*}
where $(\widetilde{S}_n=S_n-nm)_{n\geq 0}$ is the centered random walk associated with $(S_n)_{n\geq 0}$. But the variance-covariance matrix of $\widetilde{S}_1$ is equal to that of $S_1$ and might be degenerate, so that~\cite[Theorem]{Gar07} would not apply to the walk $(\widetilde{S}_n)_{n\geq 0}$. This is for example the case when $(S_n)_{n\geq 0}$ is the uniform two-dimensional random walk with step set $\mathfrak S=\{(0,1),(1,0)\}$.

In order to prove Theorem~\ref{theorem_cras_extended}, we will need a series of lemmas. We begin with some geometric considerations.

\begin{lemma}
\label{intersection_cone_subspace}
Let $m$ be a point in $K$ and $V$ be a linear subspace of $\RR^d$. If $(m+V)\cap\open{K}=\emptyset$, then $m+V$ is included in a hyperplane.
\end{lemma}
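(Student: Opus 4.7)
The plan is to argue by a standard convex-separation argument. Both $m+V$ and $\open{K}$ are nonempty convex subsets of $\RR^d$, they are disjoint by hypothesis, and $\open{K}$ is open. Thus the geometric Hahn--Banach theorem applies: there exists $u\in\RR^d\setminus\{0\}$ and $c\in\RR$ such that
\begin{equation*}
\sclr{u}{x}\leq c\quad \text{for all } x\in m+V, \qquad \sclr{u}{y}\geq c\quad \text{for all } y\in\open{K}.
\end{equation*}

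The key observation is then that the upper bound on the affine set $m+V$ forces the linear functional $\sclr{u}{\cdot}$ to be \emph{constant} on $m+V$. Indeed, for every $v\in V$ and every $t\in\RR$, the point $m+tv$ lies in $m+V$, so $t\sclr{u}{v}\leq c-\sclr{u}{m}$ for all real $t$; this is only possible if $\sclr{u}{v}=0$. Hence $u\perp V$, and $\sclr{u}{x}=\sclr{u}{m}$ for all $x\in m+V$. Therefore $m+V$ is contained in the affine hyperplane $\{x\in\RR^d:\sclr{u}{x}=\sclr{u}{m}\}$, which proves the lemma.

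The only slightly delicate point is checking that the separation theorem applies cleanly: this requires $\open{K}$ to be convex (which it is, as the interior of a convex set), nonempty (which holds by assumption on $K$), and disjoint from $m+V$ (which is exactly the hypothesis). No further ingredient beyond standard convex-analysis tools is needed; the argument is short and self-contained.
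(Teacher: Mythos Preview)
Your separation argument is clean, but it proves strictly less than the lemma requires. In this paper ``hyperplane'' means a \emph{linear} hyperplane (a subspace of codimension one): that is how the lemma is applied in Lemma~\ref{gaussian_probability_of_the_cone}, whose hypothesis ``$m+(\ker\Gamma)^{\perp}$ is not included in a hyperplane'' is deduced from~\ref{H1}, and that deduction is only valid for linear hyperplanes. Indeed, when $\dim\ker\Gamma=1$ and $m\notin(\ker\Gamma)^\perp$, the set $m+(\ker\Gamma)^\perp$ \emph{is} an affine hyperplane while~\ref{H1} still holds. So your conclusion that $m+V$ lies in the affine hyperplane $\{x:\sclr{u}{x}=\sclr{u}{m}\}$ is too weak. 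A telltale sign: you never use that $K$ is a cone or that $m\in K$, yet both hypotheses are essential in the paper's proof and both are needed to force the hyperplane through the origin.

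Your route can be salvaged with one extra step. From $\sclr{u}{y}\geq c$ for all $y\in\open{K}$ and the cone property $ty\in\open{K}$ for $t>0$, letting $t\to 0^+$ gives $c\leq 0$, and letting $t\to\infty$ gives $\sclr{u}{y}\geq 0$ for all $y\in\open{K}$, hence for all $y\in K=\close{(\open{K})}$. In particular $\sclr{u}{m}\geq 0$; combined with $\sclr{u}{m}\leq c\leq 0$ this yields $\sclr{u}{m}=0$, so $m+V\subset u^\perp$, a linear hyperplane. With this addition your Hahn--Banach approach is a legitimate and arguably slicker alternative to the paper's case analysis on $\dim V$, which handles $\dim V<d-1$ by dimension counting and $\dim V=d-1$ by a direct contradiction using the homogeneity of $\open{K}$.
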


\begin{proof} 
Clearly, $\dim V<d$, for else $m+V=\RR^d$ would intersect $\open{K}$. Since $m+V$ is included in the linear subspace 
generated by $m$ and $V$, whose dimension is less or equal to $\dim V+1$, the  affine space $m+V$ is always included in a hyperplane if $\dim V<d-1$. Thus, it remains to consider the case where $V$ is a hyperplane, that is, 
\begin{equation*}
     V=u^\perp=\{x\in\RR^d: \sclr{u}{x}=0\}
\end{equation*}
for some $u\not=0$. Assume $m\notin V$. Possibly changing $u$ to $-u$, we can assume in addition that $\sclr{m}{u}>0$. Then,
\begin{equation*}
     \bigcup_{\lambda>0}(\lambda m +V)=\bigcup_{\lambda >0}(\lambda u+V)=\{x\in\RR^d : \sclr{u}{x}>0\}=:u^+_*.
\end{equation*}
Further, by homogeneity of $V$ and $\open{K}$, we have $(\lambda m+V)\cap \open{K}=\emptyset$ for every $\lambda>0$. Hence, $\open{K}$ does not intersect $u^+_*$, and is therefore included in $u^-=\{x\in\RR^d : \sclr{u}{x}\leq 0\}$. This is a contradiction since $m\in K=\close{(\open{K})}$ (this equality holds for any convex set with non-empty interior) and $\sclr{m}{u}>0$. Thus $m$ belongs to $V$ and  $m+V=V$ is a hyperplane.
\end{proof}

\begin{lemma}
\label{gaussian_probability_of_the_cone}
Let $Y\in\RR^d$ be a random vector with Gaussian distribution $\N(m,\Gamma)$. If $m$ belongs to $K$ and if $m+(\ker\Gamma)^{\perp}$ is not included in a hyperplane, then 
\begin{equation*}
     \PP[Y\in\open{K}]>0.
\end{equation*}
\end{lemma}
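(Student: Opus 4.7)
The plan is to reduce the claim to Lemma~\ref{intersection_cone_subspace} via an affine-support argument. The key observation is that a Gaussian vector $Y\sim\N(m,\Gamma)$ is almost surely supported on the affine subspace $m+(\ker\Gamma)^\perp$, and within this affine subspace it has a smooth density which is strictly positive at every point (this is the non-degenerate Gaussian density on $(\ker\Gamma)^\perp$ translated to $m$). Therefore, any relatively open subset of $m+(\ker\Gamma)^\perp$ which is non-empty will have positive $\N(m,\Gamma)$ measure.

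Given this, the only thing to verify is that $\open{K}$ meets $m+(\ker\Gamma)^\perp$. Here I would apply the contrapositive of Lemma~\ref{intersection_cone_subspace} with $V:=(\ker\Gamma)^\perp$: the hypothesis states precisely that $m+V$ is not included in any hyperplane, so that lemma forces $(m+V)\cap\open{K}\neq\emptyset$. Since $\open{K}$ is open in $\RR^d$, its intersection with the affine subspace $m+(\ker\Gamma)^\perp$ is open for the induced topology, and being non-empty it has strictly positive Lebesgue measure on that subspace. The positivity of the Gaussian density on this subspace then yields $\PP[Y\in\open{K}]>0$.

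The only step requiring care is the justification that the restriction of $\N(m,\Gamma)$ to $m+(\ker\Gamma)^\perp$ admits an everywhere-positive density with respect to the Lebesgue measure on that subspace. This follows from the standard decomposition: if $r=\mathrm{rank}(\Gamma)$, pick an orthonormal basis of $(\ker\Gamma)^\perp$ which diagonalises $\Gamma$ with strictly positive eigenvalues, and write $Y=m+\sum_{i=1}^r\sqrt{\lambda_i}\,Z_i e_i$ with $Z_i$ i.i.d. standard normal. No obstacle is expected: once Lemma~\ref{intersection_cone_subspace} is invoked the statement is immediate.
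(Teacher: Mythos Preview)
Your proposal is correct and follows essentially the same argument as the paper's own proof: both observe that $\N(m,\Gamma)$ has a strictly positive density with respect to Lebesgue measure on the affine subspace $m+(\ker\Gamma)^{\perp}$, and then invoke Lemma~\ref{intersection_cone_subspace} (via its contrapositive) to guarantee that $(m+(\ker\Gamma)^{\perp})\cap\open{K}$ is a non-empty relatively open set. Your added justification of the density via an orthonormal diagonalisation is a harmless elaboration of what the paper simply cites as ``well known''.
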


\begin{proof}
It is well known that the Gaussian distribution $\N(m,\Gamma)$ admits a positive density with respect to Lebesgue measure on the affine space $m+(\ker\Gamma)^\perp$. Thus, it suffices to show that $(m+(\ker\Gamma)^\perp)\cap \open{K}$ is a non-empty open set in $m+(\ker\Gamma)^\perp$. But this follows from Lemma~\ref{intersection_cone_subspace} since $m+(\ker\Gamma)^\perp$ is not contained in a hyperplane.
\end{proof}

\begin{lemma}
\label{path_to_push_the_walk}
Under the hypotheses of Theorem \ref{theorem_cras_extended}, there exist $k\geq 1$ and $\delta\geq 0$ such that
\begin{equation*}
\PP[\tau_{K_{-\delta}}>k, S_k\in\open{K}]>0.
\end{equation*}
\end{lemma}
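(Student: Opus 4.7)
The plan is to first produce a time $k$ at which $\PP[S_k\in\open K]>0$, and then to choose $\delta$ large enough that, with positive probability, the walk also remains in the wider set $K_{-\delta}=K-\delta v$ throughout the first $k$ steps. The three ingredients will be the multi-dimensional central limit theorem, Lemma~\ref{gaussian_probability_of_the_cone} applied to a translated Gaussian, and the square-integrability of $\mu$.

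\emph{Step~1 (endpoint in $\open K$).} Let $Z_k=(S_k-km)/\sqrt{k}$, so that $Z_k\to Z\sim\N(0,\Gamma)$ in distribution by the CLT. Fix an auxiliary integer $k_0\geq 1$ and consider the translated limit $\sqrt{k_0}\,m+Z\sim\N(\sqrt{k_0}\,m,\Gamma)$. Because a linear hyperplane contains $\sqrt{k_0}\,m+(\ker\Gamma)^\perp$ if and only if it contains $m+(\ker\Gamma)^\perp$, hypothesis~\ref{H1} rules out such containment, and Lemma~\ref{gaussian_probability_of_the_cone} then gives $\PP[Z\in A]>0$ for the open set $A:=\open K-\sqrt{k_0}\,m$. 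The portmanteau theorem for open sets yields
\begin{equation*}
\liminf_{k\to\infty}\PP[Z_k\in A]\geq \PP[Z\in A]>0,
\end{equation*}
so we may pick $k\geq k_0$ with $\PP[Z_k\in A]>0$. Unwinding, $Z_k\in A$ translates into $S_k\in (k-\sqrt{k_0 k})\,m+\open K$; since $k\geq k_0$ the coefficient $k-\sqrt{k_0 k}$ is non-negative and $(k-\sqrt{k_0 k})m\in K$, so the convex-cone inclusion $K+\open K\subset\open K$ forces $S_k\in\open K$. Hence $\PP[S_k\in\open K]>0$.

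\emph{Steps~2 and~3 (localization and choice of $\delta$).} Since $\mu$ is square-integrable, $M:=\max_{1\leq j\leq k}|S_j|$ is almost surely finite, and monotone continuity gives $\PP[S_k\in\open K,\,M\leq R]\uparrow \PP[S_k\in\open K]>0$ as $R\to\infty$; fix such an $R$. Next, $y\in K_{-\delta}$ iff $y+\delta v\in K$, and since $v\in\open K$ the ball $B(\delta v,R)$ lies in $\open K$ for all $\delta$ sufficiently large, so we may pick $\delta\geq 0$ with $B(0,R)\subset K_{-\delta}$. On the event $\{M\leq R\}$ every $S_j$, $1\leq j\leq k$, lies in $B(0,R)\subset K_{-\delta}$, whence $\tau_{K_{-\delta}}>k$; combined with $\{S_k\in\open K\}$ this delivers the positive-probability event sought.

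The main obstacle is Step~1. A naive application of Lemma~\ref{gaussian_probability_of_the_cone} to the CLT-limit $Z\sim\N(0,\Gamma)$ fails when $m\in\partial K$ and $\Gamma$ is degenerate, for then $\PP[Z\in\open K]$ may vanish (as for $m=(1,0)$ and $\Gamma=\mathrm{diag}(0,1)$ in the quarter-plane, where $Z$ is concentrated on the vertical axis). The remedy is to translate the limit by $\sqrt{k_0}\,m\in K$: the shifted Gaussian $\sqrt{k_0}\,m+Z$ does have positive mass in $\open K$ by Lemma~\ref{gaussian_probability_of_the_cone}, and the convex-cone identity $K+\open K\subset\open K$ is precisely what allows us to absorb the translation back into $S_k$ for $k\geq k_0$, converting the $Z_k$-estimate into the desired statement about $S_k$.
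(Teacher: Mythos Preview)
Your proof is correct and follows essentially the same route as the paper's: use the CLT on $Z_k=(S_k-km)/\sqrt{k}$ together with Lemma~\ref{gaussian_probability_of_the_cone} to find a $k$ with $\PP[S_k\in\open K]>0$, then enlarge $K_{-\delta}$ so that the first $k$ steps are accommodated. The paper simply takes your auxiliary $k_0$ equal to $1$ (using the inclusion $\{Z_n\in\open K-m\}\subset\{S_n\in\open K\}$ for $n\geq 1$) and, instead of bounding $M=\max_{j\leq k}|S_j|$ explicitly, observes that $K_{-\delta}\uparrow\RR^d$ forces $\PP[\tau_{K_{-\delta}}>k]\to 1$, then intersects the two events via $\PP[A\cap B]\geq\PP[A]+\PP[B]-1$.

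One cosmetic remark: the almost-sure finiteness of $M$ is automatic (a maximum of finitely many a.s.\ finite variables); square-integrability is used only for the CLT in Step~1, not in Step~2.
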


\begin{proof} 
For $n\geq 1$ define $Z_n:=(S_n-nm)/\sqrt{n}$. The homogeneity and convexity properties of $K$ ensure that
\begin{equation*}
     \{S_n\in\open{K}\}=\{Z_n\in\open{K}-\sqrt{n}m\}\supset \{Z_n\in \open{K}-m\}
\end{equation*}
Let $\Gamma$ denote the variance-covariance matrix of $\mu$. We notice that $m+(\ker\Gamma)^{\perp}$ is not included in a hyperplane since $\mu$ satisfies~\ref{H1}. The central limit theorem asserts that $(Z_n)_{n\geq 1}$ converges in distribution to a random vector $Y$ with Gaussian distribution $\N(0,\Gamma)$. Hence, applying the Portmanteau theorem~\cite[Theorem 2.1]{Bil68}, we obtain the inequality
\begin{equation*}
\liminf_{n\to\infty}\PP[S_n\in\open{K}]\geq\PP[Y\in\open{K}-m],
\end{equation*}
where the right-hand side is positive according to Lemma~\ref{gaussian_probability_of_the_cone}. Now, to conclude it suffices to fix $k$ so that $\PP[S_k\in\open{K}]=2\epsilon>0$, and then choose $\delta$ so large that $\PP[\tau_{K_{-\delta}}>k]\geq 1-\epsilon$ (this is possible since $K_{-\delta}\uparrow\RR^d$ as $\delta\uparrow\infty$).
\end{proof}

\begin{proof}[Proof of Theorem~\ref{theorem_cras_extended}]
The proof follows the same kind of arguments as in~\cite{Gar07}. We shall first use Lemma~\ref{path_to_push_the_walk} in order to push the random walk inside the cone  at a distance $\sqrt{n}$ from the boundary, and then apply the functional central limit theorem.

Recall that $v\in\open{K}$ is fixed and that $K_{\delta}=K+\delta v$. We know by Lemma~\ref{path_to_push_the_walk} that there exist $\delta\geq 0$ and $k\geq 1$ such that
\begin{equation*}
     \PP[\tau_{K_{-\delta}}>k, S_{k}\in \open{K}]>0.
\end{equation*}
Therefore, we can find a closed ball $B:=\close{B(z,\epsilon)}\subset \open{K}$, with center at $z\in\open{K}$ and radius $\epsilon>0$, such that 
\begin{equation*}
     \PP[\tau_{K_{-\delta}}>k, S_{k}\in B]=\gamma>0.
\end{equation*}
Since $K$ is a convex cone, it satisfies the relation $K+K\subset K$, thus
\begin{equation*}
     \PP^x[\tau_{K}>k, S_{k}-x\in B]\geq \gamma,
\end{equation*}
for all $x\in K_{\delta}$ (by inclusion of events). From this, we shall deduce by induction that
\begin{equation}
\label{pousselamarche2}
p_\ell:=\PP^x[\tau_K>\ell k,S_{\ell k}-x\in \ell B]\geq \gamma^\ell,
\end{equation}
for all $\ell\geq 1$ and $x\in K_{\delta}$. Indeed, by the Markov property of the random walk,
\begin{align*}
p_{\ell+1} & \geq \EE^x[\tau_K>\ell k,S_{\ell k}-x\in \ell B, \PP^{S_{\ell k}}[\tau_K>k,S_{k}-x\in (\ell+1)B]]\\
& \geq p_\ell\cdot\inf_{\{y\in K: y-x\in\ell B\}} \PP^{y}[\tau_K>k,S_{k}-x\in (\ell+1)B].
\end{align*}
Noticing that $y-x\in \ell B$ and $S_k-y\in B$ yields $S_k-x\in (\ell+1)B$, we obtain
\begin{equation*}
      p_{\ell+1}\geq p_\ell\cdot\inf_{\{y\in K: y-x\in\ell B\}} \PP^{y}[\tau_K>k,S_{k}-y\in B].
\end{equation*}
But $x\in K_{\delta}$ and $y-x\in \ell B\subset K$ imply $y\in K_{\delta}$. Hence $p_{\ell+1}\geq p_\ell\cdot\gamma$ and~\eqref{pousselamarche2} is proved.

Now, for $x\in K_{\delta}$, define
\begin{equation*}
\widetilde p_n:=\PP^x[\tau_K>n, \vert\sclr{v}{S_n}\vert\leq \alpha\sqrt{n}],
\end{equation*}
where $\alpha>0$ will be fixed latter.
Write $\ell=\lfloor\sqrt{n}\rfloor$ for the lower integer part of $\sqrt{n}$. Using the Markov property at time $\ell k$ and the estimate in~\eqref{pousselamarche2} leads to
\begin{align*}
\widetilde p_n &\geq \PP^x[\tau_K>n,S_{\ell k}-x\in\ell B, \vert\sclr{v}{S_n}\vert\leq \alpha\sqrt{n}]\\
    &\geq \EE^x[\tau_K>\ell k,S_{\ell k}-x\in\ell B, \PP^{S_{\ell k}}[\tau_K>n-\ell k,\vert\sclr{v}{S_{n-\ell k}}\vert\leq \alpha\sqrt{n}]]\\
		&\geq \gamma^\ell\cdot\inf_{\{y\in K: y-x\in\ell B\}}\PP^y[\tau_K>n-\ell k, \vert\sclr{v}{S_{n-\ell k}}\vert\leq \alpha\sqrt{n}].
\end{align*}
Therefore, Proposition~\ref{theorem_cras_extended} will follow from the fact that
\begin{equation*}
\label{limit_pushed_probab}
     \liminf_{n\to\infty}\inf_{\{y\in K: y-x\in\ell B\}}\PP^y[\tau_K>n-\ell k, \vert\sclr{v}{S_{n-\ell k}}\vert\leq \alpha\sqrt{n}]>0,
\end{equation*}
which we shall prove now. Since $\ell k \ll n$ does not play any significant role in the last probability, we will neglect it in order to simplify notations.
Also, for any $\epsilon'>\epsilon$, we have 
\begin{equation*}
     x+\ell B=\ell\left(\frac{x}{\ell}+\close{B(z,\epsilon)}\right)\subset\ell \close{B(z,\epsilon')}
\end{equation*}
for all large enough $n$. Since an $\epsilon'>\epsilon$ can be found so that $\close{B(z,\epsilon')}\subset\open{K}$, we may replace $x+\ell B$ by $\ell B$ without loss of generality. Finally, since $\ell\leq\sqrt{n}$, we may replace $\ell B$ by $\sqrt{n}B$. With these simplifications, it remains to consider
\begin{equation*}
q_n:=\inf_{\{y\in K: y\in \sqrt{n} B\}}\PP^y[\tau_K>n, \vert\sclr{v}{S_n}\vert\leq \alpha\sqrt{n}].
\end{equation*}
By mapping $y$ to $y/\sqrt{n}$, we may write
\begin{equation*}
     q_n=\inf_{y\in B}q_n(y),
\end{equation*}
where
\begin{equation*}
q_n(y):=\PP^0[\tau_K(y\sqrt{n}+S)>n, \vert\sclr{v}{y\sqrt{n}+S_n}\vert\leq \alpha\sqrt{n}].
\end{equation*}
Let $\widetilde{S}=(\widetilde{S}_n=S_n-nm)_{n\geq0}$ denote the centered random walk associated with $S=(S_n)_{n\geq 0}$. By inclusion of events we get the lower bound
\begin{equation*}
     q_n(y)\geq \PP^0[\tau_K(y\sqrt{n}+\widetilde{S})>n, \vert\sclr{v}{y\sqrt{n}+\widetilde{S}_n}\vert\leq \alpha\sqrt{n}],
\end{equation*}
where we used the fact that $m\in K$ and $\sclr{v}{m}=0$.
Finally, let us denote by $Z_n=(Z_n(t))_{t}$ the random process with continuous paths that coincides with $\widetilde{S}_k/\sqrt{n}$ for $t=k/n$ and which is linearly interpolated elsewhere. By definition of $Z_n$ and convexity of $K$, the last inequality immediately rewrites
\begin{equation*}
     q_n(y)\geq \PP^0[\tau_K(y+Z_n(t))>1, \vert\sclr{v}{y+Z_n(1)}\vert\leq \alpha].
\end{equation*}
The functional central limit theorem ensures that $Z_n$ converges in distribution to a Brownian motion $(b(t))_t$ with variance-covariance matrix $\Gamma$. Suppose that the sequence $(y_n)_{n\geq0}$ converges to some $y\in B$. Then $(y_n+Z_n)_{n\geq 0}$ converges in distribution to $y+b$, and it follows from the Portmanteau theorem that
\begin{equation}
\label{last_label}
     \liminf_{n\to\infty}q_n(y_n)\geq \PP^0[\tau_{\open{K}}(y+b(t))>1, \vert\sclr{v}{y+b(1)}\vert< \alpha].
\end{equation}
Now, it is time to choose $\alpha$. To do this, first recall that $B=\close{B(z,\epsilon)}\subset\open{K}$. Choose $\eta>\epsilon$ so that
$\close{B(z,\eta)}\subset\open{K}$ and set (notice that we could have done this at the very beginning of the proof)
\begin{equation*}
     \alpha=\vert v\vert(\vert z\vert+\eta).
\end{equation*}     
If $\vert b(t)\vert <\eta-\epsilon$ for all $t\in [0,1]$, then $y+b(t)\in \close{B(z,\eta)}\subset\open{K}$ for all $t\in[0,1]$. Furthermore,
\begin{equation*}
     \vert\sclr{v}{y+b(1)}\vert\leq\vert v\vert(\vert y\vert+\vert b(1)\vert)<\vert v\vert(\vert z\vert+\epsilon+\eta-\epsilon)=\alpha.
\end{equation*}     
Therefore, the probability in \eqref{last_label} is bounded from below by the probability
\begin{equation*}
     \PP\left[\max_{t\in[0,1]}\vert b(t)\vert <\eta-\epsilon\right]
\end{equation*}     
that the Brownian motion $(b(t))_t$ stays near the origin for all $t\in[0,1]$, and this event happens with positive probability, regardless $\Gamma$ be positive definite or not.

To summarize, we have proved that
\begin{equation*}
\liminf_{n\to\infty}q_n(y_n)>0,
\end{equation*}
for any sequence $(y_n)_{n\geq0}\in B$ that converges to some $y$. Thus, by standard compactness arguments, we reach the conclusion that
\begin{equation*}
\liminf_{n\to\infty}q_n=\liminf_{n\to\infty}\inf_{y\in B}q_n(y)>0,
\end{equation*}
and Proposition~\ref{theorem_cras_extended} is proved.
\end{proof}

\section*{Acknowledgments}
We thank Marni Mishna for motivating discussions. Many thanks also to Marc Peign\'e for his valuable comments and encouragement. We thank two anonymous referees for useful comments and suggestions.


\begin{thebibliography}{99} 

\bibitem{BaSm97}
Ba{\~n}uelos, R. and Smits, R. (1997).
\newblock Brownian motion in cones.
\newblock {\em Probab.\ Theory Related Fields} {\bf108} 299--319.

\bibitem{Bi91}
Biane, P. (1991).
\newblock Quantum random walk on the dual of {${\rm SU}(n)$}.
\newblock {\em Probab. Theory Related Fields} {\bf89} 117--129.

\bibitem{Bi92}
Biane, P. (1992).
\newblock Minuscule weights and random walks on lattices.
\newblock In {\em Quantum probability \& related topics}, 51--65. World Sci. Publ., River Edge, NJ.

\bibitem{BiBoOC05}
Biane, P., Bougerol, P. and O'Connell, N. (2005). 
\newblock Littelmann paths and Brownian paths. 
\newblock {\it Duke Math. J.} {\bf130} 127--167.

\bibitem{Bil68}
Billingsley, P. (1968).
\newblock {\em Convergence of probability measures}.
\newblock Wiley, New York.

\bibitem{BoMi10}
Bousquet-M{\'e}lou, M. and Mishna, M. (2010).
\newblock Walks with small steps in the quarter plane.
\newblock In {\em Algorithmic probability and combinatorics}, {\em Contemp. Math.} {\bf520} 1--39. Amer. Math. Soc., Providence, RI.
  
\bibitem{DB87}
DeBlassie, R.D. (1987).
\newblock Exit times from cones in {${\bf R}^n$} of {B}rownian motion.
\newblock {\em Probab. Theory Related Fields} {\bf74} 1--29.

\bibitem{DeWa10}
Denisov, D. and Wachtel, V. (2010).
\newblock Conditional limit theorems for ordered random walks.
\newblock {\em Electron. J. Probab.} {\bf15} 292--322.

\bibitem{DeWa11}
Denisov, D. and Wachtel, W. (2015).
\newblock Random walks in cones.
\newblock {\it Ann. Probab.} {\bf43}

\bibitem{Do89}
Doney, R. A. (1989). 
\newblock On the asymptotic behaviour of first passage times for transient random walk. 
\newblock {\it Probab. Theory Related Fields} {\bf81} 239--246.

\bibitem{DoOC05}
Doumerc, Y. and O'Connell, N. (2005).
\newblock Exit problems associated with finite reflection groups.
\newblock {\em Probab. Theory Related Fields} {\bf132} 501--538.  

\bibitem{Du13}
Duraj, J. (2014).
\newblock Random walks in cones: The case of nonzero drift.
\newblock {\it Stochastic Process. Appl.} {\bf 124} 1503--1518.

\bibitem{Dy62}
Dyson, D. (1962).
\newblock A {B}rownian-motion model for the eigenvalues of a random matrix.
\newblock {\em J. Mathematical Phys.} {\bf3} 1191--1198.

\bibitem{EiKo08}
Eichelsbacher, P. and K{\"o}nig, W. (2008).
\newblock Ordered random walks.
\newblock {\em Electron. J. Probab.} {\bf13} 1307--1336.

\bibitem{FaRa12}
Fayolle, G. and Raschel, K. (2012).
\newblock Some exact asymptotics in the counting of walks in the quarter plane.
\newblock In {\em 23rd {I}ntern. {M}eeting on {P}robabilistic, {C}ombinatorial, and {A}symptotic {M}ethods for the {A}nalysis of {A}lgorithms ({A}of{A}'12)}, Discrete Math. Theor. Comput. Sci. Proc., AQ, 109--124. Assoc. Discrete Math. Theor. Comput. Sci., Nancy.

\bibitem{Gar07}
Garbit, R. (2007).
\newblock Temps de sortie d'un c\^one pour une marche al\'eatoire centr\'ee.
\newblock {\em C.~R.~Math. Acad. Sci. Paris} {\bf 135} 587--591.

\bibitem{GaRa13}
Garbit, R. and Raschel, K. (2014).
\newblock On the exit time from a cone for Brownian motion with drift.
\newblock {\it  Electron. J. Probab.} {\bf19} (2014) 1--27. 

\bibitem{Ig74}
Iglehart, D. (1974).
\newblock Random walks with negative drift conditioned to stay positive.
{\it J. Appl. Probability} {\bf11} 742--751. 

\bibitem{JoMiYe13}
Johnson, S., Mishna, M. and Yeats, K. (2013).
\newblock Towards a combinatorial understanding of lattice path asymptotics.
\newblock {\it Preprint arXiv:1305.7418}. 

\bibitem{PuRo08}
Pucha\l a, Z. and Rolski, T. (2008).
\newblock The exact asymptotic of the collision time tail distribution for independent Brownian particles with different drifts. 
\newblock {\it Probab. Theory Related Fields} {\bf142} 595--617.

\bibitem{Ro70}
Rockafellar, R. (1970).
\newblock {\it Convex analysis.} 
\newblock Princeton University Press, Princeton, N.J. 

\bibitem{Sp64}
Spitzer, F. (1964).
\newblock {\em Principles of random walk}.
\newblock The University Series in Higher Mathematics. D. Van Nostrand Co., Inc., Princeton, N.J.-Toronto-London.

\bibitem{Va99}
Varopoulos, N. (1999).
\newblock Potential theory in conical domains. 
\newblock {\it Math. Proc. Cambridge Philos. Soc.} {\bf 125} 335--384.

\end{thebibliography}
\end{document}